\numberwithin{equation}{section}
\newtheorem{thm}{Theorem}[section]
\newtheorem{prop}[thm]{Proposition}
\newtheorem{cor}[thm]{Corollary}
\newtheorem{rem}[thm]{Remark}
\newtheorem{lem}[thm]{Lemma}
\newtheorem{assum}[thm]{Assumption}
\def\E{{\cal E}}
\def\F{{\cal F}}
\def\N{{\cal N}}
\def\R{{\mathbb R}}
\def\Pp{{\mathbb P}}
\def\Ee{{\mathbb E}}
\def\d{{\rm d}}
\def\address#1#2{\begingroup
\noindent\parbox[t]{16cm}{%
\small{\scshape\ignorespaces#1}\par\vskip1ex
\noindent\small{\itshape E-mail address}%
\/: #2\par\vskip4ex}\hfill%
\endgroup}%
\title{Upper Rate Functions of Brownian Motion Type for Symmetric Jump Processes}
\author{Yuichi Shiozawa\quad Jian Wang}
\begin{document}
\maketitle
\begin{abstract}
Let $X$ be a symmetric jump process on $\R^d$ such that
the corresponding jumping kernel $J(x,y)$ satisfies
$$J(x,y)\le \frac{c}{|x-y|^{d+2}\log^{1+\varepsilon}(e+|x-y|)}$$
for all $x,y\in\R^d$ with $|x-y|\ge1$ and some constants $c,\varepsilon>0$.
Under additional mild assumptions on $J(x,y)$ for $|x-y|<1$,
we show that $C\sqrt{r\log \log r}$ with some constant $C>0$ is an upper rate function of the process $X$,
which enjoys the same form as that for Brownian motions.
The approach is based on heat kernel estimates of large time for the process $X$.
As a by-product, we also obtain two-sided heat kernel estimates of large time
for symmetric jump processes whose jumping kernels are comparable to
$$\frac{1}{|x-y|^{d+2+\varepsilon}}$$
for all $x,y\in\R^d$ with $|x-y|\ge1$ and some constant $\varepsilon>0$.
\end{abstract}
\noindent
 AMS subject Classification:\  60J75, 47G20, 60G52.   \\
\noindent
 Keywords: upper rate function; symmetric jump process, Dirichlet form; heat kernel.
 \vskip 2cm

\section{Introduction and Main Results}
In this paper, we are concerned with upper rate functions,
which are a quantitative expression 
of conservativeness,
for a class of symmetric jump processes on ${\mathbb R}^d$.
In particular, we investigate conditions on jumping kernels
such that the corresponding upper rate functions are of the iterated logarithm type.

It is well known that by Kolmogorov's test (see, e.g., \cite[4.12]{IM}),
the function $R(t)=\sqrt{ct\log\log t}$ with constant $c>0$ is an upper rate function
for the standard Brownian motion on $\R^d$
if and only if  $c>2$.
This fact immediately implies Khintchine's law of the iterated logarithm.
Similar results of this type are true even for a large class of  L\'evy processes.
For example, earlier Gnedenko \cite{Gn43} (see also \cite[Proposition 48.9]{Sa13}) showed that
if a L\'evy process $X=(\{X_t\}_{t\ge0}, \Pp)$ on $\R$ satisfies
$\Ee X_1=0$ and $\Ee X_1^2<\infty$, then
$$\limsup_{t\to\infty} \frac{|X_t|}{\sqrt{t\log\log t}}=(\Ee X_1^2)^{1/2}, \quad \text{a.s.}$$
Sirao \cite{Si53} also obtained analogous  results in terms of
integral tests on the distribution function of $X$.
We note that such results as \cite{Gn43,Si53} do not hold in general
for L\'evy processes with the infinite second moment, for instance,
symmetric $\alpha$-stable processes with $\alpha\in (0,2)$  (see \cite{Kh38} or  \cite[Theorem 2.1]{Sa01}).

The purpose of this paper is to establish upper rate functions of the form $\sqrt{t\log\log t}$
for a class of non-L\'evy symmetric jump processes generated
by regular Dirichlet forms on $L^2({\mathbb R}^d;\d x)$, which we introduce later.
Let $J(x,y)$ be a non-negative measurable function on $\R^d\times \R^d$, and set
\begin{align*}
{\cal D}&=\left\{f\in L^2({\mathbb R}^d;\d x) \Big|\iint_{x\neq y} (f(y)-f(x))^2J(x,y)\,\d x\,\d y<\infty\right\},\\
\E(f,f)&=\iint_{x\neq y} (f(y)-f(x))^2J(x,y)\,\d x\,\d y, \quad f\in {\cal D}.
\end{align*}
Throughout this paper, we always impose the following
\begin{assum}\label{assum:jump}\rm The function $J(x,y)$ satisfies
\begin{itemize}
\item[(i)] $J(x,y)=J(y,x)$ for all $x\neq y$;
\item[(ii)] there exist constants
$0<\kappa_1\le \kappa_2<\infty$
and $0<\alpha_1\le\alpha_2<2$ such that for all $x,y\in \R^d$ with $0<|x-y|<1$,
\begin{equation}\label{A:jump-kernel}
\frac{\kappa_1}{|x-y|^{d+\alpha_1}}\leq J(x,y)\leq \frac{\kappa_2}{|x-y|^{d+\alpha_2}};\end{equation}
\item[(iii)]\begin{equation}\label{e:01}\sup_{x\in\R^d} \int_{\{|x-y|\ge1\}}J(x,y)\,\d y<\infty.
\end{equation}
\end{itemize}
\end{assum}
\noindent Denote by $C_c^{\rm lip}(\R^d)$ the set of Lipschitz continuous functions on $\R^d$ with compact support.
Let $\F$ be the closure of $C_c^{\rm lip}(\R^d)$ with respect to the norm
$\|f\|_{{\cal E}_1}:=\sqrt{\E(f,f)+\|f\|_2^2}$ on ${\cal D}$.
Then it is easy to check that
the bilinear form $(\E,\F)$ is a symmetric regular Dirichlet form on $L^2(\R^d;\d x)$, see e.g.\ \cite[Example 1.2.4]{FOT11}.
The function $J(x,y)$  is called the jumping kernel corresponding to $(\E,\F)$.
Associated with the regular Dirichlet form $(\E,\F)$ is a symmetric Hunt process
$X=(\{X_t\}_{t\ge0}, \{\Pp^x\}_{x\in\R^d\setminus \N})$ with state space $\R^d\setminus \N$,
where $\N\subset \R^d$ is a properly exceptional set for $(\E,\F)$.

\ \

The main result is as follows.

\begin{thm}\label{main} Let $X=(\{X_t\}_{t\ge0}, \{\Pp^x\}_{x\in\R^d\setminus \N})$
be the symmetric Hunt process generated by the regular Dirichlet form $(\E,\F)$ as above.
Let $J(x,y)$ be the jumping kernel corresponding to $(\E,\F)$. Suppose that
\begin{equation}\label{e:second}
\sup_{x\in \R^d}\int_{{\mathbb R}^d} |x-y|^2J(x,y)\,\d y <\infty.
\end{equation}
Then, we have the following two statements.
\begin{itemize}
\item[$(1)$]
If there exist positive constants $c$ and $\varepsilon$ such that for any $x,y\in\R^d$ with $|x-y|\ge 1$,
$$J(x,y)\le \frac{c}{|x-y|^{d+2}\log^{1+\varepsilon}(e+|x-y|)},$$
then there exists a constant $C_0>0$ such that for all $x\in \R^d \backslash \N$,
\begin{equation}\label{e:rate}
\Pp^x(|X_t-x|\le C_0\sqrt{ t\log\log t} \text{ for all sufficiently large }t)=1.
\end{equation}

\item[$(2)$] If there exists a positive constant $c$ such that for any $x,y\in\R^d$ with $|x-y|\ge 1$,
$$J(x,y)\le \frac{c}{|x-y|^{d+2}},$$ then there exists a constant $c_0>0$ such that for all $x\in \R^d \backslash \N$,
$$\Pp^x(|X_t-x|\le c_0\sqrt{ t\log\log t} \text{ for all sufficiently large } t)=0.$$
\end{itemize}
\end{thm}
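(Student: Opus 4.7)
The plan is to reduce each part of the theorem to a Borel--Cantelli argument along the dyadic sequence $t_n=2^n$, with large-time heat kernel estimates for $X$ supplying the required tail probabilities. Part~(1) will call for a Gaussian-type upper bound on $p_t(x,y)$ together with Borel--Cantelli~I; part~(2) will call for a matching lower bound together with a Markovian conditional Borel--Cantelli~II.

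For part~(1), I would first establish $p_t(x,y)\le Ct^{-d/2}\exp\bigl(-c|x-y|^2/t\bigr)$ for large $t$ in the regime $|x-y|\lesssim t/\log(e+t)$. The natural route is a Davies-type perturbation with polynomial weights whose energy estimate relies on the integrability $\int_{|z|\ge 1}|z|^2\log^{-1-\varepsilon}(e+|z|)\,\d z<\infty$ that the hypothesis of~(1) supplies together with~(\ref{e:second}); to make this work I would split $J=J_1+J_2$ \`a la Meyer, running Nash-type arguments on the small-jump part $J_1$ and absorbing the bounded-range part $J_2$ as a perturbation. Integrating this heat kernel bound yields $\Pp^x(|X_t-x|\ge r)\le C\exp(-cr^2/t)$ on the relevant scale, and a standard L\'evy-type maximal inequality transports it to $\sup_{s\le t}|X_s-x|$. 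With $r_n=C_0\sqrt{t_n\log\log t_n}$ the bound becomes $Cn^{-cC_0^2}$, summable once $C_0$ is large, and Borel--Cantelli~I gives~(\ref{e:rate}).

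For part~(2), the input is a matching lower bound $p_t(x,y)\ge ct^{-d/2}\exp(-C|x-y|^2/t)$ in the same Gaussian regime, which rests on the small-jump lower bound in Assumption~\ref{assum:jump}(ii) (forcing enough diffusive activity) while the assumption $J\le c/|x-y|^{d+2}$ keeps large jumps from dominating. Combined with the Markov property this yields
$$\Pp^x\bigl(|X_{t_{n+1}}-X_{t_n}|\ge c_0\sqrt{t_{n+1}\log\log t_{n+1}}\,\big|\,\F_{t_n}\bigr)\ge c\,n^{-Cc_0^2}$$
uniformly in $X_{t_n}$, divergent when $c_0$ is small. A conditional Borel--Cantelli applied to $(\F_{t_n})$ then produces infinitely many such increments a.s., and the triangle inequality forces $|X_{t_k}-x|\ge(c_0/2)\sqrt{t_k\log\log t_k}$ for $k=n$ or $k=n+1$, contradicting the event in the conclusion. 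The main obstacle is the heat kernel estimate itself: the hypothesis of~(1) sits exactly at the borderline between Gaussian and non-Gaussian large-time behaviour, and the $\log^{1+\varepsilon}$ factor is precisely what makes the Davies weight argument converge.
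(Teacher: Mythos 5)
Your part~(2) is essentially the paper's argument: the lower bound of Theorem~\ref{T:lower} plus a conditional Borel--Cantelli along $t_n=2^n$ is exactly how the paper concludes. Your part~(1), however, contains a real gap, and it is precisely the trap the paper warns against in the introduction.

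The Gaussian upper bound does \emph{not} hold in the regime $|x-y|\lesssim t/\log(e+t)$, as you claim. Inspecting Theorem~\ref{thm-upper-bound}, the Gaussian bound requires $\theta_0|x-y|^2/\log\Phi(|x-y|)\le t\le|x-y|^2$. With $\phi(r)=\log^{1+\varepsilon}(e+r)$ one has $\Phi(r)\asymp\log^{\varepsilon}(e+r)$ and $\log\Phi(r)\asymp\log\log r$, so the upper end of the Gaussian regime is only $|x-y|\lesssim\sqrt{t\log\log|x-y|/\theta_0}\asymp\sqrt{t\log\log t}$, i.e.\ the same order $\psi(t)=\sqrt{t\log\log t}$ you are trying to control. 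Consequently, the step ``integrating this heat kernel bound yields $\Pp^x(|X_t-x|\ge r)\le C\exp(-cr^2/t)$'' is false at the scale $r\asymp C_0\psi(t)$: the integral $\int_{|y-x|\ge r}p(t,x,y)\,\d y$ extends over the region where the Gaussian bound breaks down, and there the heat kernel decays only polynomially in $|x-y|$. The tail contribution is of size $t/(r^2\phi(r))$, not exponentially small. It so happens (with $\phi(r)=\log^{1+\varepsilon}r$) that this tail is also $O(\log^{-1-\varepsilon}t)$, hence summable along $t_n=2^n$, but this is a separate estimate from the Gaussian integral, and it is where the $\log^{1+\varepsilon}$ factor actually earns its keep. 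The paper handles this by partitioning the integration region into three annuli: the thin Gaussian annulus $C\psi(t)\le|y-x|\le\theta_0^*\psi(t)$ (Theorem~\ref{thm-upper-bound} and Remark~\ref{rem-upper-bound}), an intermediate annulus controlled by the non-Gaussian term $U(t,\cdot)$ from Theorem~\ref{thm-upper-bound}(2), and the far tail controlled by Proposition~\ref{thm-upper-bound3}. All three pieces end up being of the same size $\asymp\log^{-1-\varepsilon}t$, so the non-Gaussian ranges are not negligible; your proposal has no mechanism for them.

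A second, smaller point: attributing the role of $\log^{1+\varepsilon}$ solely to ``making the Davies weight argument converge'' misses that it is also (and more fundamentally) what makes the polynomial tail $\int_{|y-x|\ge C\psi(t)}t|x-y|^{-(d+2)}\phi(|x-y|)^{-1}\,\d y$ summable in $n$; with $\phi\equiv1$ this tail is only $O(1/\log\log t)$, which is not summable, consistent with the converse statement in part~(2).
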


The condition \eqref{e:second} implies that the jumping kernel of $X$ has the finite second moment.
\eqref{e:rate} indicates that the function $C_0\sqrt{t\log\log t}$ is
the so-called upper rate function of the process $X$,
which describes the forefront of the process $X$.
As we mentioned before, $\sqrt{(2+\varepsilon)t\log\log t}$ with $\varepsilon>0$ is an upper rate function
for the standard Brownian motion on $\R^d$.
Therefore, Theorem \ref{main} shows that
if the jumping kernel of $X$ satisfies the condition as in Theorem \ref{main} (1),
then $X$ enjoys upper rate functions of the Brownian motion type.

According to the results of \cite{Gn43,Si53},
we believe that $C\sqrt{t\log\log t}$ with some large constant $C>0$ should be
an upper rate function for all symmetric jump processes
with finite second moments;
however, we do not know how to prove this at this stage.
Here it should be noted that the arguments of \cite{Gn43,Si53} heavily depend
on the characterization of L\'evy processes (see \cite[Sections 2 and 3]{Sa01} for more details),
while in the present setting such characterization is not available.
To overcome this difficulty,
we prove Theorem \ref{main} by using heat kernel estimates.
The idea of obtaining rate functions via heat kernel
estimates has appeared in the literatures before, see \cite{SW17} and the references therein.
There are a few differences and difficulties in the present paper, which require some new ideas  and non-trivial arguments.
\begin{itemize}
\item[(1)]
For symmetric jump processes of variable order (see \eqref{A:jump-kernel}),
it seems impossible to present two-sided estimates
for the associated heat kernel, see \cite{BBCK09} for details.
Instead of this approach, here we turn to consider the heat kernel estimate
for large time, which is enough to yield the rate function of the process.

\item[(2)] There are a lot of works on heat kernel estimates
for symmetric jump processes on $\R^d$ generated by non-local symmetric Dirichlet forms,
see \cite{BBCK09, BGK09, CKK08, CK10, CKK11, Fo09} and the references therein.
However, there seems no study on the heat kernel estimates
when the jumping kernel has the finite second moment (even with precise algebraic decay).
Despite this, we can establish two-sided heat kernel estimates of large time
for symmetric jump processes whose jumping kernels are comparable to
$|x-y|^{-(d+2+\varepsilon)}$ for all $x,y\in\R^d$ with $|x-y|\ge1$ and some constant $\varepsilon>0$
(Corollary \ref{c:two1}).
We can also obtain nice
upper bounds of heat kernel estimates for processes
whose jumping kernel involves the logarithmic factor (Theorem \ref{thm-upper-bound}).
\end{itemize}

By analogy with Brownian motions,
one may guess that in order to prove Theorem \ref{main},
it suffices to get Gaussian type upper bound estimates for the heat kernel.
However, as far as we have discussed in this paper,
such upper bounds are only true for some interval of large time, not for all large time.
This is quite different from the Brownian motion case, and so we need further considerations
on the heat kernel bounds
(Theorem \ref{thm-upper-bound} and the proof of Theorem \ref{main} in the last section).

Bass and Kumagai \cite{BaKu08} proved
the convergence to symmetric diffusion processes
of continuous time random walks on ${\mathbb Z}^d$ with unbounded range.
In particular, they assumed the uniform finite second moment condition on conductances
similar to \eqref{e:second} on jumping kernels, see \cite[(A3) in p.\ 2043]{BaKu08}.
For the proof of the convergence result,
they obtained sharp on-diagonal heat kernel estimates, H\"{o}lder regularity of parabolic functions and Harnack inequalities.
Our result can be regarded as an another approach
to get the diffusivity of symmetric jump processes with jumping kernels having the finite second moment.

\ \

The reminder of this paper is arranged as follows.
In the next section, we recall some known results for heat kernel of the process $X$, and then present
related  assumptions used in our paper. Section \ref{section3} is devoted to establish upper bounds and lower bounds of heat kernel for large time. In particular,
Theorems \ref{thm-upper-bound} and \ref{T:lower} are interesting of their own.
Then the proof of  Theorem \ref{main} will be presented in the last section.

\ \

For any two positive measurable functions $f$ and $g$,
$f\asymp g$ means that there is a constant $c>1$ such that $c^{-1} f\le g\le cf$.

\section{Known results and assumptions}
Recall that $X=(\{X_t\}_{t\geq 0}, \{\Pp^x\}_{x\in \R^d\setminus \N})$ is
the Hunt process associated with $(\E,\F)$,
which can start from any point in $\R^d\setminus \N$.
Let $P(t,x,\d y)$ be the transition probability of
$X$. The transition semigroup $\{P_t,t\ge0\}$ of $X$ is defined
for $x\in \R^d\setminus \N$ by
$$P_tf(x)=\Ee^x (f(X_t))=\int_{\R^d} f(y)\,P(t,x, \d y),\quad f\ge 0, t\ge 0.$$

The following result has been proved in \cite[Theorem 1.2]{BBCK09} and \cite[Proposition 3.1]{CKK11}.
\begin{thm}{\rm (\cite[Theorem 1.2]{BBCK09} and \cite[Proposition 3.1]{CKK11})} \label{l:upp}
Under Assumption {\rm \ref{assum:jump}}, there are a properly exceptional set $\N\subset \R^d$,
a non-negative symmetric kernel $p(t,x,y)$ defined on
$(0,\infty)\times (\R^d\setminus \N) \times (\R^d\setminus \N)$ such that $P(t,x,\d y)=p(t,x,y)\,\d y$, and
$$p(t,x,y)\le c_0(t^{-d/\alpha_1}\vee t^{-d/2}),\quad t>0, \ x,y\in \R^d\setminus \N$$
holds with some constant $c_0>0$. Moreover, there is
an $\E$-nest $\{F_k:k\ge1\}$ of compact subsets of $\R^d$ so that $${\cal N}={\mathbb R}^d\setminus \bigcup_{k=1}^{\infty}F_k$$
and that for each fixed  $t>0$ and $y\in {\mathbb R}^d\setminus {\cal N}$, the map
$x\mapsto p(t,x,y)$ is continuous on each $F_k.$ \end{thm}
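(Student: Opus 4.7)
The conclusion has three parts -- existence of a jointly measurable symmetric density $p(t,x,y)$, the on-diagonal bound $c_0(t^{-d/\alpha_1}\vee t^{-d/2})$, and the $\E$-nest continuity -- and I would follow the scheme of \cite{BBCK09,CKK11} in three steps.

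\emph{Step 1: short-time $t^{-d/\alpha_1}$ via Nash.} The lower bound $J(x,y)\ge\kappa_1|x-y|^{-d-\alpha_1}$ on $\{|x-y|<1\}$ in Assumption \ref{assum:jump}(ii), together with (iii), gives a comparison $\E(f,f)+c\|f\|_2^2\ge c'\E^{(\alpha_1)}(f,f)$ with the full $\alpha_1$-stable form. The classical fractional Nash inequality then produces
$$
\|f\|_2^{2+2\alpha_1/d}\le C_1\bigl(\E(f,f)+\|f\|_2^2\bigr)\|f\|_1^{2\alpha_1/d},\qquad f\in\F,
$$
and Nash's differential argument on $\|P_tf\|_2^2$ yields $\|P_t\|_{1\to\infty}\le C(t\wedge 1)^{-d/\alpha_1}$. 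By Dunford--Pettis, $P_t$ admits a symmetric density $p(t,x,y)$, and $p(t,x,y)\le c_0 t^{-d/\alpha_1}$ for $t\in(0,1]$.

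\emph{Step 2: large-time $t^{-d/2}$ via Meyer decomposition.} Split $\E=\E_1+\E_2$ where $\E_1$ retains only jumps of size $<1$ and $\E_2$ only jumps of size $\ge 1$. By Assumption \ref{assum:jump}(iii), $\E_2$ is the Dirichlet form of a bounded symmetric Markovian operator $B$ on $L^2$. The semigroup $Q_t$ generated by $\E_1$ has uniformly bounded jumps and still inherits the stable-like small-scale lower bound; a local-CLT / finite-range iteration argument (as carried out in \cite{BBCK09}) then gives a kernel bound $q(t,x,y)\le c' t^{-d/2}$ for $t\ge 1$. Meyer's identity
$$
P_tf=Q_tf+\int_0^t Q_{t-s}\,B P_sf\,\d s
$$
and a Gronwall iteration, using that $B$ is bounded on every $L^p$, transfer the bound to $P_t$ and yield $p(t,x,y)\le c_0 t^{-d/2}$ for $t\ge 1$. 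Combined with Step~1 this is the claimed $c_0(t^{-d/\alpha_1}\vee t^{-d/2})$.

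\emph{Step 3: $\E$-nest regularity.} The stable-like small-scale lower bound on $J$ is enough to run the parabolic H\"older-regularity argument (Krylov--Safonov / De Giorgi iteration of \cite{BBCK09,CKK11}) for $X$-caloric functions. This provides, for each fixed $t>0$ and $y$ outside a properly exceptional set $\N$, a continuous modification of $x\mapsto p(t,x,y)$; the modification lies in $\F$ by Chapman--Kolmogorov and Step~2, hence is $\E$-quasi-continuous. General Dirichlet-form theory \cite{FOT11} then produces an $\E$-nest of compact sets $\{F_k\}$ on which it is continuous, and a countable diagonalisation over a dense set of $(t,y)$ together with an enlargement of $\N$ delivers one nest with $\N=\R^d\setminus\bigcup_k F_k$ as required.

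\textbf{Main obstacle.} The delicate ingredient is the long-time diffusive bound of Step~2. A pure Nash argument based on the $\alpha_1$-stable lower bound forces a zero-order correction $\|f\|_2^2$ to appear, and therefore only yields $p(t,x,y)\le c$ for $t\ge 1$ -- no decay. Upgrading this to the genuine polynomial rate $t^{-d/2}$ demands isolating the long jumps as a bounded $L^2$-perturbation via Meyer's decomposition, proving a CLT-type on-diagonal estimate for the finite-range sub-semigroup $Q_t$, and then re-assembling by Duhamel/Gronwall. This is precisely the technical core of \cite{BBCK09,CKK11} and is what makes the two distinct regimes $t^{-d/\alpha_1}$ and $t^{-d/2}$ coexist in the final bound.
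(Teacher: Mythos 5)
This result is quoted verbatim from \cite[Theorem 1.2]{BBCK09} and \cite[Proposition 3.1]{CKK11}; the paper supplies no proof of its own, so your proposal is a reconstruction of the argument in the cited references rather than something to be matched against a proof in the text. Your Step 1 and Step 3 are faithful to the strategy there: a fractional Nash inequality coming from the $\alpha_1$-stable lower bound yields the $t^{-d/\alpha_1}$ short-time bound, and a parabolic H\"older estimate plus standard quasi-continuity theory from \cite{FOT11} produces the $\E$-nest and the statement about continuity on each $F_k$.

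Step 2, however, has a genuine gap as stated. In the Duhamel identity $P_tf=Q_tf+\int_0^t Q_{t-s}BP_sf\,\d s$ the natural attempt to close a Gronwall loop reads
\begin{equation*}
\|P_t\|_{1\to\infty}\le\|Q_t\|_{1\to\infty}+\|B\|_{1\to 1}\int_0^t\|Q_{t-s}\|_{1\to\infty}\,\d s,
\end{equation*}
and the integral diverges near $s=t$ because $\|Q_u\|_{1\to\infty}\asymp u^{-d/\alpha_1}$ as $u\downarrow 0$ (non-integrable once $d\ge\alpha_1$). One could try interpolating between $L^p$ spaces, but that is not what the cited sources do, and it is more work than is needed. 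The efficient route is to observe that $\E=\E_1+\E_2\ge\E_1$ pointwise, so any Nash--Faber--Krahn inequality satisfied by the truncated form $\E_1$ transfers \emph{verbatim} to $\E$; the technical core of \cite{BBCK09,CKK11} (and of \cite{BGK09}, invoked by the present paper) is to establish for $\E_1$ a Nash inequality with the Brownian exponents $\|f\|_2^{2+4/d}\le C\,\E_1(f,f)\,\|f\|_1^{4/d}$ on the appropriate scale, without a zero-order correction, by a weighted Poincar\'e/iteration argument. Once that is in place, the $t^{-d/2}$ on-diagonal bound for $p(t,\cdot,\cdot)$ at large time follows directly from Dirichlet-form monotonicity, and no perturbation-series reassembly is required. (Meyer's construction is used in the paper, and in \cite{BGK09,BBCK09}, only to transfer \emph{off-diagonal} bounds from the truncated process to the full one, not for the on-diagonal estimate.) Your ``main obstacle'' paragraph correctly identifies that the zero-order $\|f\|_2^2$ term is the enemy, but the remedy is a sharper functional inequality, not Duhamel--Gronwall.
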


To obtain upper bounds of off-diagonal estimates for $p(t,x,y)$, we will use the following Davies' method, see \cite{CKS87}. Note that, the so-called {\it carr\'{e} du champ} associated with $(\E,\F)$ is given by
$$\Gamma(f,g)(x)=\int_{\R^d} (f(y)-f(x))(g(y)-g(x))J(x,y)\,\d y,\quad f,g\in \F.$$
We can extend $\Gamma (f,f)$ to any non-negative measurable function $f$, whenever it is pointwise well defined.

The following proposition immediately follows from Theorem \ref{l:upp} and  \cite[Corollary 3.28]{CKS87}.
\begin{prop}\label{P:da}
Suppose that Assumption {\rm \ref{assum:jump}} holds.
Then, there exists a constant $c_0>0$ such that 
for any $x,y\in \R^d\setminus \N$ and $t>0$,
$$p(t,x,y)\le c_0 (t^{-d/\alpha_1}\vee t^{-d/2}) \exp\left(E(2t, x,y)\right),$$
where
$$ E(t,x,y):=-\sup\{|\psi(x)-\psi(y)|-t\Lambda(\psi): \psi\in C_c^{\rm lip}(\R^d) \hbox{ with }\Lambda(\psi)<\infty\}
$$ and
$$\Lambda(\psi):=\|e^{-2\psi}\Gamma(e^\psi, e^\psi)\|_\infty.$$
\end{prop}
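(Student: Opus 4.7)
The plan is to derive this statement as a direct specialization of the Davies perturbation machinery codified in \cite[Corollary 3.28]{CKS87}. That general result says: for a symmetric Markov semigroup with carré du champ $\Gamma$, if one has an on-diagonal upper bound $p(t,x,x) \le \Phi(t)$, then the twisted semigroup $P_t^\psi f := e^{\psi} P_t(e^{-\psi} f)$ satisfies an $L^2\to L^2$ bound of the form $\|P_t^\psi\|_{2\to 2}\le e^{t\Lambda(\psi)}$ for every $\psi \in C_c^{\rm lip}(\R^d)$ with $\Lambda(\psi)<\infty$, and combining this with $\Phi(t)$ via the standard $L^1\to L^2$/$L^2\to L^\infty$ chain produces the off-diagonal bound
$$p(t,x,y)\le \Phi(t)\exp\bigl(-|\psi(x)-\psi(y)|+2t\Lambda(\psi)\bigr).$$
Taking $\Phi(t)=c_0(t^{-d/\alpha_1}\vee t^{-d/2})$ from Theorem \ref{l:upp} and optimizing over admissible $\psi$ produces exactly the factor $\exp(E(2t,x,y))$ in the statement.

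First I would check that CKS87's hypotheses hold in our setting. The form $(\E,\F)$ is a symmetric regular Dirichlet form on $L^2(\R^d;dx)$, and for a pure-jump form the carré du champ is the familiar expression $\Gamma(f,f)(x)=\int(f(y)-f(x))^2 J(x,y)\,dy$, so the exponential tilt identity
$$\E(e^\psi f,e^{-\psi}f)-\E(f,f)= \text{(symmetrized remainder)} \le \Lambda(\psi)\|f\|_2^2$$
follows from the elementary pointwise inequality $(e^{\psi(y)}-e^{\psi(x)})(e^{-\psi(y)}-e^{-\psi(x)})\le 0$ together with the bound $\bigl|e^{-2\psi(x)}(e^{\psi(y)}-e^{\psi(x)})^2\bigr|\le \Lambda(\psi)$ after integration against $J(x,y)\,dy$. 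This is exactly the computation that makes $\Lambda(\psi)=\|e^{-2\psi}\Gamma(e^\psi,e^\psi)\|_\infty$ the correct Davies constant for a non-local form. The admissibility of $\psi\in C_c^{\rm lip}(\R^d)$ as a multiplier on $\F$ is automatic because $e^{\pm\psi}$ is bounded, Lipschitz, and compactly perturbed from a constant, and $\F$ is the closure of $C_c^{\rm lip}(\R^d)$ under $\|\cdot\|_{\E_1}$.

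Next I would invoke \cite[Corollary 3.28]{CKS87} in the form recalled above. The reason the time is doubled to $2t$ in $E(2t,x,y)$ is the semigroup decomposition $P_{2t}=P_t\circ P_t$: one factors through $L^2$ and uses the on-diagonal bound at time $t$ on each side, while the twist contributes $e^{t\Lambda(\psi)}$ from each factor, giving $2t\Lambda(\psi)$ total. Taking the supremum of $|\psi(x)-\psi(y)|-2t\Lambda(\psi)$ over admissible $\psi$ converts the pointwise bound into the uniform bound with exponent $E(2t,x,y)$.

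The one technical obstacle is that Davies' method naturally yields estimates in the quasi-everywhere/$L^2$ sense, while the conclusion is a genuine pointwise statement for all $x,y\in\R^d\setminus\N$. This is exactly why Theorem \ref{l:upp} was formulated with the $\E$-nest $\{F_k\}$ and the separate continuity of $x\mapsto p(t,x,y)$ on each $F_k$: the quasi-everywhere bound can be upgraded to the stated pointwise bound on $(\R^d\setminus\N)\times(\R^d\setminus\N)$ by continuity along the nest, after which the proposition follows.
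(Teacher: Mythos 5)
Your proposal is correct and takes essentially the same approach as the paper: the paper's entire proof is the single sentence that the proposition ``immediately follows from Theorem \ref{l:upp} and \cite[Corollary 3.28]{CKS87}.'' Your write-up simply spells out the Davies perturbation mechanism behind that citation (the twisted semigroup, the $P_{2t}=P_t\circ P_t$ splitting producing the factor $2t$, and the upgrade from quasi-everywhere to pointwise via the $\E$-nest), all of which is consistent with the intended argument.
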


\ \

In the next section, we will consider the following two assumptions on the jumping kernel $J(x,y)$ for $x,y\in \R^d$ with $|x-y|\ge1$.
\begin{enumerate}
\item[{\bf(A)}] There are a constant $c>0$ and
an increasing function $\phi: [1,\infty)\rightarrow (1,\infty]$ such that
for all $x,y\in\R^d$ with $|x-y|\ge 1$,
\begin{equation}
\label{upp-2} J(x,y)\leq \frac{c}{|x-y|^{d+2}\phi(|x-y|)}.
\end{equation}
Moreover, the function
$$\Phi(s):= \left(\int_s^{\infty}\frac{{\rm d}r}{r\phi(r)}\right)^{-1},\quad s\ge 1$$
satisfies
\begin{itemize}
\item $\Phi(\infty)=\infty$;
\item the function $s\mapsto \log \Phi(s)/s$ is decreasing on $[1,\infty)$;
\item there is a constant $\gamma>0$ such that
\begin{equation}\label{sec00}\sup_{s\ge1}\frac{\Phi(s)}{\phi^\gamma(s)}<\infty.\end{equation}
\end{itemize}
\item[{\bf(B)}]  There is a constant $c>0$ such that
for all $x,y\in\R^d$ with $|x-y|\ge 1$,
\begin{equation}
\label{upp-1}J(x,y)\leq \frac{c}{|x-y|^{d+2}}.
\end{equation}
It also holds that
\begin{equation}\label{sec}\sup_{x\in \R^d} \int_{\{|x-y|\ge1\}} |x-y|^2J(x,y)\,\d y<\infty.\end{equation}

\end{enumerate}

Because $\phi$ is increasing on $[1,\infty)$, \eqref{upp-2} is stronger than \eqref{upp-1}.
Since the condition $\Phi(\infty)=\infty$ implies \eqref{sec},
{\bf (A)} is stronger than {\bf (B)}.
For instance,
$\phi(r)=(1+r)^{\theta}$, $\phi(r)=\log^{1+\theta} (e+r)$
and $\phi(r)=\log(e+r)\log^{1+\theta}\log(e^e+r)$ for any $\theta>0$ satisfy the conditions in {\bf(A)}.
On the other hand, under \eqref{A:jump-kernel} and \eqref{sec},
$$\sup_{x\in\R^d}\int_{\R^d} |x-y|^2 J(x,y)\,\d y<\infty.$$  In particular, there is a constant $c_1>0$ such that for any $K>0$,
\begin{equation}\label{big-jump}
\sup_{x\in\R^d}\int_{\{|x-y|>K\}} J(x,y)\,\d y\le \frac{c_1}{K^2}.
\end{equation}

\section{Heat kernel estimates}\label{section3}
Throughout this section, we always suppose that Assumption {\rm \ref{assum:jump}} holds.
We will derive upper and lower bound estimates of the heat kernel for large time respectively.

\subsection{Heat kernel upper bound}

\begin{prop}\label{thm-upper-bound0}
Under Assumption {\bf (B)},
there exist positive constants $t_0$ and $c$ such that for all $t\ge t_0$ and $x,y\in \R^d\backslash \N,$
$$p(t,x,y)\leq
\begin{cases}
\displaystyle \frac{c}{t^{d/2}},& t\geq |x-y|^2, \\
\displaystyle \frac{ct}{|x-y|^{d+2}}, & t\leq |x-y|^2.
\end{cases}$$
\end{prop}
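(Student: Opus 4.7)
The two parts of the bound correspond to two distinct regimes. For $t \ge |x-y|^2$, the required bound $p(t,x,y) \le c/t^{d/2}$ is immediate from Theorem \ref{l:upp}. Set $t_0 := 1$; since $\alpha_1 \le 2$, for $t \ge 1$ one has $t^{-d/\alpha_1} \le t^{-d/2}$, so $p(t,x,y) \le c_0(t^{-d/\alpha_1} \vee t^{-d/2}) = c_0 t^{-d/2}$. No further work is needed here.

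For the off-diagonal regime $t \le |x-y|^2$, my plan is to combine Davies' method (Proposition \ref{P:da}) with a Meyer-type decomposition of the jumping kernel. Fix a truncation radius $R>0$ and split $J = J_R + K_R$, where $J_R(u,v) = J(u,v)\mathbf{1}_{\{|u-v|\le R\}}$. Let $p^R$ denote the heat kernel of the Dirichlet form whose jumping kernel is $J_R$. A standard Duhamel/positivity argument gives
$$ p(t,x,y) \le p^R(t,x,y) + \int_0^t \!\!\iint p^R(s,x,u)\, K_R(u,v)\, p(t-s,v,y)\, du\, dv\, ds. $$
Choosing $R = |x-y|/4$, I bound the Meyer integral as follows. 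On the main region $\{|u-x|\le |x-y|/4,\ |v-y|\le |x-y|/4\}$, the triangle inequality gives $|u-v| \ge |x-y|/2$, and Assumption (B) yields $K_R(u,v) \le c/|u-v|^{d+2} \le c'/|x-y|^{d+2}$; using $\int p^R(s,x,u)\,du \le 1$ and $\int p(t-s,v,y)\,dv \le 1$, this contributes at most $ct/|x-y|^{d+2}$. The complementary region (where $|u-x|$ or $|v-y|$ exceeds $|x-y|/4$) is handled via Chebyshev's inequality and the bound $\Ee^x[|X^R_s - x|^2] \le c\,s$, which comes from the finite second-moment assumption \eqref{sec} together with \eqref{big-jump}.

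For $p^R(t,x,y)$, I would apply Davies' method to the truncated form. With the test function $\psi(z) = \lambda(|z-y|\wedge M)$ for $M\ge |x-y|$, the estimate $|\psi(z')-\psi(z)| \le \lambda(|z-z'|\wedge R)$ combined with $(e^u-1)^2 \le u^2 e^{2|u|}$ and \eqref{sec} gives
$$ \Lambda^R(\psi) = \sup_z \int_{|z-z'|\le R} (e^{\psi(z')-\psi(z)}-1)^2 J(z,z')\,dz' \le c\,\lambda^2 e^{2\lambda R}. $$
Optimizing the Davies exponent $-\lambda|x-y| + 2tc\lambda^2 e^{2\lambda R}$ in $\lambda$ yields Gaussian-type decay $p^R(t,x,y) \le c t^{-d/2}\exp(-c'|x-y|^2/t)$ valid in the range $t \le |x-y|^2$, from which $p^R(t,x,y) \le c t/|x-y|^{d+2}$ follows. \textbf{The main obstacle} will be executing this optimization uniformly on $t \le |x-y|^2$: the naive bound $p^R \le c t^{-d/2}$ from Theorem \ref{l:upp} is strictly weaker than $c t/|x-y|^{d+2}$ whenever $t < |x-y|^2$, so one must genuinely exploit the super-exponential decay coming from the truncated form. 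The finite second-moment condition \eqref{sec} is exactly what eliminates a logarithmic factor that would otherwise appear in $\Lambda^R(\psi)$ (under only Assumption \eqref{upp-1}), enabling a clean optimization.
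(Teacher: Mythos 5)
Your on-diagonal step and the overall strategy (truncate the jumping kernel at a radius proportional to $|x-y|$, bound the truncated heat kernel via Davies, then recombine via a Meyer-type inequality) match the paper's plan. The Meyer step is more elaborate than necessary — the paper invokes the one-line bound $p(t,x,y)\le p^{(K)}(t,x,y)+t\sup_{|u-v|\ge K}J(u,v)$ from \cite[Lemma 3.1(c)]{BGK09} (or \cite[Lemma 3.7(b)]{BBCK09}), whereas you re-derive a Duhamel expansion and split into a main and a complementary region, handling the latter by Chebyshev. That extra work could be made to go through, but it is not where the real issue lies.

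The genuine gap is in the Davies optimization. You assert that minimizing $-\lambda|x-y|+2ct\lambda^2 e^{2\lambda R}$ (with $R\asymp|x-y|$) yields $p^R(t,x,y)\le ct^{-d/2}\exp(-c'|x-y|^2/t)$ on all of $t\le|x-y|^2$, and then deduce the polynomial bound from this Gaussian. This is false. Writing $\rho=|x-y|$ and $u=\rho^2/t\ge1$, the choice $\lambda\sim\rho/t$ that would produce a Gaussian exponent makes $e^{2\lambda R}\sim e^{c\rho^2/t}=e^{cu}$ explode, so the quadratic term dominates and the exponent becomes positive once $u$ is of order $\log\Phi(\rho)$ or larger. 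Indeed, one of the main points of this paper (see the discussion in the Introduction and Theorem \ref{thm-upper-bound}) is precisely that the Gaussian bound holds only on the intermediate window $\theta_0|x-y|^2/\log\Phi(|x-y|)\le t\le|x-y|^2$, not on all of $t\le|x-y|^2$. The paper's proof of Proposition \ref{thm-upper-bound0} avoids this obstacle by choosing $\lambda$ differently: with $K=R/\theta$ and $\lambda=\frac{1}{3K}\log(K^2/t)$ one gets $e^{3\lambda K}=K^2/t$, hence
\begin{equation*}
E^{(K)}(t,x_0,y_0)\le c-\frac{\theta}{3}\log\Big(\frac{K^2}{t}\Big),
\end{equation*}
so that $p^{(K)}(t,x_0,y_0)\le c\,t^{-d/2}(t/K^2)^{\theta/3}$; taking $\theta=3(d+2)/2$ then yields the polynomial bound $ct/|x-y|^{d+2}$ directly, with no Gaussian intermediary. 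To repair your argument you would need to replace the aim of a Gaussian bound by this logarithmic choice of $\lambda$ (or an equivalent one) producing polynomial decay.
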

\begin{proof}
We mainly follow the proof of \cite[Theorem 1.4]{BGK09},
but here we suppose that the time parameter $t$ is large.
By Theorem \ref{l:upp}, there are  constants $t_0, c_0>0$ such that for all $x,y\in \R^d\backslash \N$ and $t\ge t_0$,
$$p(t,x,y)\leq {c_0}{t^{-d/2}}.$$
Thus, we only need to verify the off-diagonal estimate for $p(t,x,y).$

We first introduce  truncated Dirichlet forms associated with $(\E,\F)$.
For $0<K<\infty$, define
$${\cal E}^{(K)}(u,v)=\iint_{\{0<|x-y|<K\}} (u(x)-u(y))(v(x)-v(y))\,J(x,y)\,\d x\,\d y, \quad u,v\in {\cal F}.$$
Then by (\ref{big-jump}),
\begin{equation*}
\begin{split}
\iint_{\{|x-y|\geq K\}} (u(x)-u(y))^2J(x,y)\,\d x\,\d y
&\leq 4\int_{\R^d}u(x)^2 \left(\int_{\{|x-y|\geq K\}}J(x,y)\,\d y\right)\,\d x\\
&\leq \frac{c_1}{K^2}\|u\|_2^2,
\end{split}
\end{equation*}
which yields that
\begin{equation}\label{e:ffee1} \begin{split}
{\cal E}(u,u)
=&{\cal E}^{(K)}(u,u)+\iint_{\{|x-y|\geq K\}} (u(x)-u(y))^2J(x,y)\,\d x\,\d y\\
\leq & {\cal E}^{(K)}(u,u)+\frac{c_1}{K^2}\|u\|_2^2.\end{split}
\end{equation} In particular, $({\cal E}^{(K)},{\cal F})$ is a regular Dirichlet form on $L^2(\R^d;\d x)$.

Let $P^{(K)}(t,x,\d y)$ be the transition probability associated with $({\cal E}^{(K)},{\cal F})$.
Then, by \eqref{e:ffee1} and the proof of \cite[Theorem 1.2]{BBCK09} (or \cite[Proposition 3.1]{CKK11}), there exist positive constants $c_2, c_3$ and $t_1$ such that for all $t\ge t_1$ and $x,y\in\R^d\backslash \N$,
$$P^{(K)}(t,x,\d y)=p^{(K)}(t,x,y)\,\d y$$ and
\begin{equation}\label{e:ont}
p^{(K)}(t,x,y)\le c_2 t^{-d/2}\exp\left(\frac{c_3t}{K^2}\right).
\end{equation}

Next, we will obtain the off-diagonal estimate for $p^{(K)}(t,x,y)$, by applying Proposition \ref{P:da} to $({\cal E}^{(K)},{\cal F})$. For fixed points $x_0, y_0\in \R^d$,
let $R=|x_0-y_0|$ and $K=R/\theta$ for some $\theta>0$,
which will be determined later.
For $\lambda>0$, we define the function $\psi\in C_c^{\rm lip}(\R^d)$ by
$$\psi(x)=[\lambda(R-|x-y_0|)]\vee 0.$$
Then, by the inequality $(e^r-1)^2\leq r^2e^{2|r|}$ for $r\in {\mathbb R}$ and the fact that $|\psi(x)-\psi(y)|\leq \lambda |x-y|$ for all $x,y\in\R^d$, we get
\begin{equation}\label{gamma-upper}
\begin{split}
\Gamma_K(\psi)(x):&= e^{-2\psi(x)} \Gamma^{(K)}(e^{\psi}, e^{\psi})(x)\\
&=\int_{\{0<|x-y|<K\}}\left(e^{\psi(y)-\psi(x)}-1\right)^2J(x,y)\,{\rm d}y\\
&\leq \int_{\{0<|x-y|<K\}}\left(\psi(x)-\psi(y)\right)^2 e^{2|\psi(x)-\psi(y)|}J(x,y)\,{\rm d}y\\
&\leq e^{2\lambda K}\lambda^2 \int_{\{0<|x-y|<K\}}|x-y|^2J(x,y)\,\,{\rm d}y \\
&\leq c_4\lambda ^2 e^{2\lambda K}\leq c_5\frac{e^{3\lambda K}}{K^2},
\end{split}
\end{equation} where in the third inequality we used \eqref{sec} and the last inequality follows from the fact that $r^2\le 2e^r$ for all $r\geq 0$. Hence,
$$\Lambda_K(\psi):=\| \Gamma_K(\psi)\|_\infty \le c_5\frac{e^{3\lambda K}}{K^2},$$
which implies that
\begin{equation}\label{upper-exp}
E^{(K)}(t,x_0,y_0)\le -|\psi(x_0)-\psi(y_0)|+\Lambda(\psi)t \leq c_5\frac{e^{3\lambda K}}{K^2}t-\lambda R.
\end{equation}

In what follows, we assume that  $t<K^2$.
In  (\ref{upper-exp}), if we take
$$\lambda=\frac{1}{3K}\log\left(\frac{K^2}{t}\right),$$
then
$$E^{(K)}(t,x_0,y_0)\leq -\frac{R}{3K}\log\left(\frac{K^2}{t}\right)+\frac{c_5}{K^2}\frac{K^2}{t}t
=c_5-\frac{\theta}{3}\log\left(\frac{K^2}{t}\right)$$
so that by \eqref{e:ont} and Proposition \ref{P:da},
\begin{align*}p^{(K)}(t,x_0,y_0)\leq & c_6t^{-d/2}\exp\left(\frac{c_3t}{K^2}+E^{(K)}(2t,x_0,y_0)\right)\\
\le & c_6 t^{-d/2}\exp\left(c_3+c_5-\frac{\theta}{3}\log\left(\frac{K^2}{2t}\right)\right)\\
=& c_7t^{-d/2}\left(\frac{2t}{K^2}\right)^{\theta/3}.\end{align*}
Hence by letting $\theta=3(d+2)/2$, we have
\begin{equation}\label{e:upper-trunc0}
p^{(K)}(t,x_0,y_0)\leq c_7t^{-d/2}\left(\frac{2t}{K^2}\right)^{(d+2)/2}
=\frac{c_8t}{K^{d+2}}=\frac{c_8\theta^{d+2}t}{|x_0- y_0|^{d+2}}.
\end{equation}

We finally obtain the off-diagonal upper bound of $p(t,x,y)$.
In fact,
by Meyer's construction (see e.g.\ \cite[Lemma 3.1(c)]{BGK09} or \cite[Lemma 3.7(b)]{BBCK09}),
\eqref{e:upper-trunc0} and \eqref{upp-1},
\begin{equation}\label{e:meyer}
p(t,x_0,y_0)\leq p^{(K)}(t,x_0,y_0)+t\sup_{|x-y|\geq K}J(x,y)\leq \frac{c_9t}{|x_0-y_0|^{d+2}}.
\end{equation}
Therefore, the proof is complete.
\end{proof}

\begin{thm}\label{thm-upper-bound}
Suppose that Assumption {\bf (A)} holds.
Then, for any $\kappa\ge1$,
there exist positive constants $\theta_0\in (0,1)$, $t_0\ge 1$ and $c_i \ (i=1,2)$ such that
for all $t\ge t_0$,
$$
p(t,x,y)
\leq
\begin{cases}
\displaystyle \frac{c_1}{t^{d/2}},& t\geq |x-y|^2,\\
\displaystyle \frac{c_1}{t^{d/2}}\exp\left(-\frac{c_2|x-y|^2}{t}\right),
& \displaystyle  \frac{\theta_0|x-y|^2}{\log \Phi(|x-y|)}\leq t\leq |x-y|^2,\\
\displaystyle U(t,|x-y|,\phi,\Phi, \kappa),
& \displaystyle t\leq \frac{\theta_0|x-y|^2}{\log \Phi(|x-y|)},
\end{cases}
$$ where $$U(t,|x-y|,\phi, \Phi,\kappa):=\frac{c_1}{t^{d/2}\Phi(|x-y|/\kappa)^{\kappa/8}} \wedge \frac{c_1t}{|x-y|^{d+2}}+\frac{c_1t}{|x-y|^{d+2}\phi(|x-y|/\kappa)}.$$
\end{thm}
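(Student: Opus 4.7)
The plan is to extend the Davies--Meyer argument of Proposition \ref{thm-upper-bound0} by exploiting the extra $\phi$-decay afforded by Assumption {\bf (A)}. Set $R:=|x-y|$, fix $\kappa\ge1$, take $K:=R/\kappa$, and use Meyer's decomposition
$$p(t,x,y)\leq p^{(K)}(t,x,y)+t\sup_{|z-w|\ge K}J(z,w);$$
by \eqref{upp-2}, the second term is bounded by $ct/(R^{d+2}\phi(R/\kappa))$, producing the last term of $U$. The on-diagonal case $t\ge R^2$ is immediate from Theorem \ref{l:upp}, so it remains to estimate $p^{(K)}$ when $t\le R^2$.

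For this, apply Proposition \ref{P:da} to $(\mathcal{E}^{(K)},\mathcal{F})$ with the Lipschitz cutoff $\psi(x)=[\lambda(R-|x-y_0|)]\vee 0$, and combine the resulting Davies exponent with the on-diagonal bound \eqref{e:ont}. The critical new ingredient is a refinement of the bound on $\Lambda_K(\psi)$. Splitting the integrand defining $\Gamma_K(\psi)$ at $|x-y|=1$ and inserting \eqref{upp-2} gives
$$\Lambda_K(\psi)\leq c\lambda^2 e^{2\lambda}+c\lambda^2\int_1^{K}\frac{e^{2\lambda r}}{r\phi(r)}\,\d r.$$
A Laplace-type argument bounds the last integral by $c\,e^{2\lambda K}/(\lambda K\phi(K))$ when $\lambda K$ is large, while the bound $\int_1^{\infty}\d r/(r\phi(r))=1/\Phi(1)$ handles the case $\lambda K$ bounded.

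Now calibrate $\lambda$ regime by regime. In the Gaussian band $\theta_0 R^2/\log\Phi(R)\leq t\leq R^2$, take $\lambda\asymp R/t$; the monotonicity of $\log\Phi(s)/s$ from Assumption {\bf (A)} together with the lower bound on $t$ forces $2\lambda K\leq c\log\Phi(K)$, so $e^{2\lambda K}$ is a small power of $\Phi(K)$, and using \eqref{sec00} the term $2t\Lambda_K(\psi)$ is absorbed into $(c_2/2)R^2/t$, yielding $E^{(K)}(2t,x,y)\leq -c_2 R^2/t$. In the tail regime $t\leq\theta_0 R^2/\log\Phi(R)$, take instead $\lambda=\log\Phi(K)/(4K)$, so that $\lambda R=(\kappa/4)\log\Phi(K)$ and $e^{2\lambda K}=\Phi(K)^{1/2}$; the smallness of $t$ and \eqref{sec00} absorb $2t\Lambda_K(\psi)$ into $(\kappa/8)\log\Phi(K)$, producing $E^{(K)}\leq-(\kappa/8)\log\Phi(K)$ and hence the $\Phi(R/\kappa)^{-\kappa/8}$ decay. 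Taking the minimum with the polynomial bound $ct/R^{d+2}$ of Proposition \ref{thm-upper-bound0} (which applies because {\bf (A)} implies {\bf (B)}) and adding the Meyer tail assembles $U(t,|x-y|,\phi,\Phi,\kappa)$.

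The main obstacle is the fine-tuning of the triple $(\lambda,K,\theta_0)$ so that one lands simultaneously on the Gaussian bound inside the middle band and on the $\Phi^{-\kappa/8}$ bound inside the tail band, with a smooth transition at $t=\theta_0 R^2/\log\Phi(R)$. The decisive analytic inputs are the Laplace-type estimate on $\int_1^K e^{2\lambda r}/(r\phi(r))\,\d r$, the monotonicity of $\log\Phi(s)/s$ (which yields the essential cross-regime comparison $\lambda K\leftrightarrow\log\Phi(K)$), and \eqref{sec00}, which converts stray powers of $\Phi(K)$ into powers of $\phi(K)$ that get absorbed by the $\phi$-decay in the integrand. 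Careful bookkeeping of the factor $2$ coming from $E^{(K)}$ being evaluated at time $2t$, along with the factor coming from \eqref{e:ont}, is what ultimately pins down the explicit exponent $\kappa/8$.
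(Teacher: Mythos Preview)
Your overall strategy---Davies' method plus Meyer decomposition, with two regimes calibrated by different choices of $\lambda$---matches the paper exactly, and your choice $\lambda=\log\Phi(K)/(4K)$ in the tail regime is precisely the one that produces the exponent $\kappa/8$. The point where your argument diverges, and where there is a genuine gap, is the estimate of
\[
\int_1^K \frac{e^{2\lambda r}}{r\phi(r)}\,\d r.
\]
You invoke a ``Laplace-type argument'' giving the bound $c\,e^{2\lambda K}/(\lambda K\phi(K))$. This bound is not justified under Assumption~{\bf (A)}: a Laplace estimate of that form requires the integrand to be eventually increasing, i.e.\ $2\lambda>1/r+\phi'(r)/\phi(r)$, and more seriously it needs some doubling-type regularity of $\phi$ near $K$ to put $\phi(K)$ (rather than, say, $\phi(K/2)$ or $\phi(1)$) in the denominator. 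Assumption~{\bf (A)} contains no such regularity for $\phi$. Even granting the Laplace bound, your subsequent absorption step produces a stray factor $\Phi(K)^{1/2}/\phi(K)$, and \eqref{sec00} only gives $\Phi(K)\le c\phi(K)^\gamma$ for \emph{some} $\gamma>0$; if $\gamma>2$ this ratio is unbounded.

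The paper bypasses the Laplace estimate entirely by a cleaner device. In each regime the chosen $\lambda$ satisfies $2\lambda\le \beta\,\log\Phi(K)/K$ with a fixed $\beta<1$ (in the Gaussian band $\beta=2\eta/\theta$, in the tail $\beta=1/2$). The monotonicity of $s\mapsto\log\Phi(s)/s$ then gives the \emph{pointwise} bound $e^{2\lambda r}\le \Phi(r)^\beta$ for every $r\in[1,K]$, whence
\[
\int_1^K\frac{e^{2\lambda r}}{r\phi(r)}\,\d r
\le \int_1^\infty\frac{\Phi(r)^\beta}{r\phi(r)}\,\d r
=\int_1^\infty \Bigl(\int_r^\infty\frac{\d s}{s\phi(s)}\Bigr)^{-\beta}\frac{\d r}{r\phi(r)}
=\frac{\Phi(1)^{-(1-\beta)}}{1-\beta},
\]
a finite constant independent of $K$. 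This yields $\Lambda_K(\psi)\le C_*\lambda^2$ directly and drives both regimes. Two further points of contrast: in the Gaussian band the paper takes $K=R$ (not $R/\kappa$), and the role of \eqref{sec00} there is \emph{not} to control $\Lambda_K(\psi)$ but to absorb the Meyer tail $t/(R^{d+2}\phi(R))$ back into the Gaussian term $t^{-d/2}e^{-cR^2/t}$.
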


\begin{proof}
We use the same notations as in those 
of Proposition \ref{thm-upper-bound0}.
By Theorem \ref{l:upp}, we only need to consider off-diagonal estimates, i.e.,
the case that $t\le |x-y|^2$.
We split the proof into two parts. Even though 
the proof below is based on the Davies method,
the argument is much more delicate than that of Proposition \ref{thm-upper-bound0}.

Let $K\geq 1$. For fixed points $x_0, y_0\in \R^d$ with $|x_0-y_0|\geq 1$,
let $R=|x_0-y_0|$. For $\lambda>0$, define the function $\psi\in C_c^{\rm lip}(\R^d)$ by
$$\psi(x)=[\lambda (R-|x-y_0|)]\vee 0.$$
Then 
by the same argument as in (\ref{gamma-upper}),
and by Assumption \ref{assum:jump} (ii) and  Assumption {\bf(A)},
\begin{equation}\label{e:poff}
\begin{split}
\Gamma_K(\psi)(x)&=\int_{\{0<|x-y|<K\}}\left(e^{\psi(y)-\psi(x)}-1\right)^2J(x,y)\,{\rm d}y\\
&\leq \lambda ^2\int_{\{0<|x-y|<K\}}|x-y|^2e^{2\lambda |x-y|}J(x,y)\,\,{\rm d}y \\
&=\lambda ^2\int_{\{0<|x-y|<1\}}|x-y|^2e^{2\lambda |x-y|}J(x,y)\,\,{\rm d}y\\
&\quad +\lambda ^2\int_{\{1\leq |x-y|<K\}}|x-y|^2e^{2\lambda |x-y|}J(x,y)\,\,{\rm d}y \\
&\leq \lambda ^2e^{2\lambda}
\sup_{x\in \R^d}\int_{\{0<|x-y|<1\}}|x-y|^2J(x,y)\,\,{\rm d}y\\
&\quad +c_1\lambda^2\int_{\{1\leq |x-y|<K\}}
\frac{e^{2\lambda |x-y|}}{|x-y|^d\phi(|x-y|)}\,\,{\rm d}y\\
&=:{\rm (I)}+{\rm (II)}.
\end{split}
\end{equation}

(1) We first derive the desired Gaussian upper bound.
For any $\theta>0$, let $\eta$ be a positive constant such that $\eta/\theta<1/4$.
Assume  that $K=R$ and $t\geq \theta K^2/\log \Phi(K)$. We set $\lambda=\eta K/t$.
Since $K\ge 1$ and the function $s\mapsto\log \Phi(s)/s $ is decreasing on $[1,\infty)$ by Assumption {\bf (A)},
$$e^{2\lambda}=e^{2\eta K/t}
\leq \exp\left(2\eta\frac{\log \Phi(K)}{\theta K}\right)
\leq e^{2\eta\log\Phi(1)/\theta}=\Phi(1)^{2\eta/\theta},$$
and so
$$
{\rm (I)}\leq c_2\Phi(1)^{2\eta/\theta}\lambda^2\le  c_2(1+\Phi(1))^{2\eta/\theta}\lambda^2
\le c_2(1+\Phi(1))^{1/2}\lambda^2 =: c_3 \lambda^2.
$$
If $1\leq r\leq K$, then, also due to the decreasing property of the function $s\mapsto\log \Phi(s)/s$,
$$e^{2\lambda r}=e^{2\eta Kr/t}
\leq \exp\left(2\eta r\frac{\log \Phi(K)}{\theta K}\right)
\leq \exp\left(2\eta r\frac{\log \Phi(r)}{\theta r}\right)=\Phi(r)^{2\eta/\theta},$$
which implies that
\begin{align*}
{\rm (II)}
&\leq  c_1\lambda^2\int_{\{|x-y|\geq 1\}}
\frac{\Phi(|x-y|)^{2\eta/\theta}}{|x-y|^d \phi(|x-y|)}\,\,{\rm d}y
=c_4 \lambda^2 \int_1^{\infty}\frac{\Phi(r)^{2\eta/\theta}}{r\phi(r)}\,\d r\\
&= c_4\lambda^2 \int_1^\infty \frac{1}{r\phi(r)}  \left(\int_r^\infty \frac{1}{s\phi(s)}\,\d s\right)^{-2\eta/\theta}\,\d r
= \frac{c_4\lambda^2}{1-(2\eta/\theta)}\left(\int_1^\infty \frac{1}{s\phi(s)}\,\d s\right)^{1-(2\eta/\theta)} \\
&\le {2c_4\lambda^2} \left(1+\int_1^\infty \frac{1}{s\phi(s)}\,\d s\right) =:c_5\lambda^2.
\end{align*}
Hence by \eqref{e:poff},
$$\Lambda_{K}(\psi)=\|\Gamma_K(\psi)\|_{\infty}
\leq (c_3+c_5)\lambda^2=:C_*\lambda^2.$$
In particular, we have
$$E^{(K)}(t,x_0,y_0)\le \Lambda_{K}(\psi)t-| \psi(x_0)-\psi(y_0)|\le
C_{*}\lambda^2 t -\lambda R=-\eta(1-\eta C_{*})\frac{K^2}{t}.$$
This along with Proposition \ref{P:da} yields that
there is a constant $c_6>0$ such that for all  $t\geq \theta K^2/\log \phi(K)$,
\begin{equation}\label{e:upper-trunc}
p^{(K)}(t,x_0,y_0)\leq c_6t^{-d/2}\exp\left\{\frac{c_0t}{K^2}-\frac{\eta(1-\eta C_{*})}{2}\frac{K^2}{t}\right\}.
\end{equation}
We note that the constants $c_6$ and $C_*$ above are independent of $\eta$ and $\theta$.

In what follows, we assume that
$$
\frac{\theta K^2}{\log \Phi(K)}\leq t\leq K^2.$$
Since by \eqref{e:upper-trunc},
$$
p^{(K)}(t,x_0,y_0)
\leq c_7t^{-d/2}\exp\left\{-\frac{\eta(1-\eta C_{*})}{2}\frac{K^2}{t}\right\},
$$
we have by the first inequality in \eqref{e:meyer} and 
\eqref{upp-2},
\begin{equation}\label{comparison}
\begin{split}
p(t,x_0,y_0)
&\leq p^{(K)}(t,x_0,y_0)+t\sup_{|x-y|\geq K} J(x,y)\\
&\leq c_7t^{-d/2}\exp\left\{-\frac{\eta(1-\eta C_{*})}{2}\frac{K^2}{t}\right\}
+\frac{c_8t}{K^{d+2}\phi(K)}.
\end{split}
\end{equation}

Let $\eta_*$ be a positive constant such that
$$\frac{\eta_*((1-\eta_*C_*)\vee 4)}{2\theta}\in \Big(0,1\wedge \frac{1}{\gamma}\Big ),$$
where $\gamma$ is the constant in Assumption {\bf (A)}.
Then by \eqref{sec00}, there is a constant $c_9>0$ such that
$$
\exp\left\{-\frac{\eta_*(1-\eta_* C_{*})}{2}\frac{K^2}{t}\right\}
\geq\exp\left\{-\frac{\eta_*(1-\eta_* C_{*})}{2}\frac{\log\Phi(K)}{\theta}\right\}
=\frac{1}{\Phi(K)^{\eta_*(1-\eta_* C_*)/(2\theta)}}\geq \frac{c_9}{\phi(K)}.
$$
By noting that
$$
\frac{1}{t^{d/2}}=\frac{t}{t^{(d+2)/2}}
\geq t\left(\frac{1}{K^2}\right)^{(d+2)/2}
=\frac{t}{K^{d+2}},
$$
we get
\begin{equation*}
\begin{split}
\frac{t}{K^{d+2}\phi(K)}
\leq c_9^{-1}t^{-d/2}\exp\left\{-\frac{\eta_*(1-\eta_* C_{*})}{2}\frac{K^2}{t}\right\}.
\end{split}
\end{equation*}
Hence if we take $\eta=\eta_*$ in \eqref{comparison}, then
\begin{equation*}
\begin{split}
p(t,x_0,y_0)
\leq&
c_7t^{-d/2}\exp\left\{-\frac{\eta_*(1-\eta_* C_{*})}{2}\frac{K^2}{t}\right\}\\
&+c_{10} t^{-d/2}
\exp\left\{-\frac{\eta_*(1-\eta_* C_{*})}{2}\frac{K^2}{t}\right\}\\
=&:c_*t^{-d/2}\exp\left\{-\frac{\eta_*(1-\eta_* C_{*})}{2}\frac{|x_0-y_0|^2}{t}\right\}.
\end{split}
\end{equation*}
Namely, for each fixed $\theta>0$,
we get the desired Gaussian bound for any $t>0$ and $x,y\in \R^d$ such that
$$\frac{\theta|x-y|^2}{\log \Phi(|x-y|)}\leq t\leq |x-y|^2.$$

(2) Let $\kappa\ge1$. Here we let  $K=R/\kappa$.
Since we can choose $t_0$ in the statement large enough,
we may and do assume
that $|x_0-y_0|$ is  large enough such that $|x_0-y_0|\ge \kappa$, and so $K\ge 1$.
Below we assume that
$$t\le \frac{\theta_0 R^2}{\log \Phi(R)}$$
for some $\theta_0>0$ small enough, which will be determined later.

Let
$$\lambda=\frac{\log \Phi(K)}{4K}.$$
Since the function $s\mapsto\log \Phi(s)/s $ on $[1,\infty)$ is decreasing by Assumption {\bf (A)},
$$e^{2\lambda r}= \exp\left(r\frac{ \log \Phi(K)}{2K}\right)\le \exp\left(r\frac{\log \Phi(r)}{2r}\right)
=\Phi(r)^{1/2},\quad 1\le r\le K.$$
Hence by \eqref{e:poff},
$$\Lambda_{K}(\psi)\le c_0\lambda^2,$$
where $c_0>0$ is independent of $\theta_0, \kappa$ and $\lambda.$
In particular, by choosing $\theta_0\in(0,1)$ so small that $c_0 \kappa\theta_0\le 2,$ we have
\begin{align*}E^{(K)}(t,x_0,y_0)&\le \Lambda_{K}(\psi)t-|\psi(x_0)-\psi(y_0)|\\
&\le
c_0\lambda^2 t -\lambda R\\
&\le \frac{c_0}{16} \left(\frac{ \log \Phi(K)}{K}\right)^2\frac{\theta_0R^2}{\log \Phi(R)}- \frac{\log \Phi(K)}{4K} R\\
&=\frac{\kappa}{4} \log \Phi(K) \left(-1+\frac{c_0 \kappa\theta_0 }{4}\frac{\log \Phi(K)}{\log \Phi(\kappa K)} \right)\\
&\le -\frac{\kappa}{8} \log \Phi(K),\end{align*}
where we used $\kappa\ge1$ and the increasing property of the function $\Phi(r)$ in the last inequality.
We then have by Proposition \ref{P:da},
$$
p^{(K)}(t,x_0,y_0)
\leq c_1t^{-d/2} \frac{1}{\Phi(K)^{\kappa/8}},
$$ which yields that by the same way as in \eqref{comparison},
$$ p(t,x_0,y_0)\le c_1t^{-d/2} \frac{1}{\Phi(|x_0-y_0|/\kappa)^{\kappa/8}}+\frac{c_2t}{|x_0-y_0|^{d+2}\phi(|x_0-y_0|/\kappa)}.$$

Noting that Assumption {\bf (B)} is weaker than Assumption {\bf(A)},
we know from Proposition \ref{thm-upper-bound0} that for
any $x_0,y_0\in \R^d\setminus\N$ and $t\ge t_0$ with $t\le |x_0-y_0|^2$,
$$ p(t,x_0,y_0)\le \frac{c_3t}{|x_0-y_0|^{d+2}}.$$
Since $\phi$ is an increasing function on $[1,\infty)$ and $|x_0-y_0|\geq \kappa$,
we have $\phi(|x_0-y_0|/\kappa)\geq \phi(1)$ so that
$$\frac{t}{|x_0-y_0|^{d+2}\phi(|x_0-y_0|/\kappa)}\le  \frac{t}{\phi(1)|x_0-y_0|^{d+2}}.
$$
Therefore, we finally obtain
$$p(t,x_0,y_0)\le \frac{c_4}{t^{d/2}\Phi(|x-y|/\kappa)^{\kappa/8}} \wedge \frac{c_4t}{|x-y|^{d+2}}
+\frac{c_4t}{|x-y|^{d+2}\phi(|x-y|/\kappa)}.$$

\ \

Combining the conclusions in (1) and (2) above, we get the desired assertion.
\end{proof}

\begin{rem}\label{rem-upper-bound}\rm (i) According to Theorem \ref{thm-upper-bound}, we can obtain \cite[Theorem 3.3]{CKK11}
when $\phi(r)=\exp({cr^\beta})$ for some constants $c>0$ and $\beta\in(0,1]$. By \cite[(1.14) in
Theorem 1.2]{CKK11}, we know that upper bound estimates in Theorem \ref{thm-upper-bound} are sharp
up to constants in this case.

(ii) By part (1) of the argument for Theorem \ref{thm-upper-bound}, we indeed prove that for any $\theta>0$, there are constants $c_i=c_i(\theta)>0$ $(i=1,2)$ such that for all $t\ge t_0$ and $x,y\in \R^d$ with
$$\frac{\theta|x-y|^2}{\log \Phi(|x-y|)}\leq t\leq |x-y|^2,$$ it holds that
$$p(t,x,y)\le \frac{c_1}{t^{d/2}}\exp\left(-\frac{c_2|x-y|^2}{t}\right).$$
\end{rem}

As a consequence of Theorem \ref{thm-upper-bound},
we have the following statement about upper bound estimates of  the heat kernel for a new class of symmetric jump processes.
\begin{cor}\label{c:two}
Assume that there are positive constants $\varepsilon, c_0$ such that for all $x,y\in \R^d$ with $|x-y|\ge 1$,
$$J(x,y)\le \frac{c_0}{|x-y|^{d+2+\varepsilon}}.$$ Then, there exist positive constants $t_0\ge 1$, $\theta_0>0$ and $c_i \ (i=1,2)$ such that for all $t\ge t_0$,
$$p(t,x,y)\leq
\begin{cases}
\displaystyle \frac{c_1}{t^{d/2}},& t\geq |x-y|^2,\\
\displaystyle \frac{c_1}{t^{d/2}}\exp\left(-\frac{c_2|x-y|^2}{t}\right),
& \displaystyle  \frac{\theta_0|x-y|^2}{\log(1+|x-y|)}\leq t\leq |x-y|^2,\\
\displaystyle \frac{c_1t}{|x-y|^{d+2+\varepsilon}}, & \displaystyle t\leq \frac{\theta_0|x-y|^2}{\log(1+|x-y|)}.
\end{cases}$$\end{cor}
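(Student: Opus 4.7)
The plan is to realize Corollary \ref{c:two} as a direct application of Theorem \ref{thm-upper-bound} with the power-law choice $\phi(r)=Ar^{\varepsilon}$ on $[1,\infty)$, where $A>0$ is a constant chosen large enough to absorb $c_0$ and to make $\phi>1$. With this $\phi$, the hypothesis $J(x,y)\le c_0/|x-y|^{d+2+\varepsilon}$ rewrites as \eqref{upp-2} with constant $c_0/A$, so all that remains is to check the structural conditions in Assumption (A), identify $\log\Phi(|x-y|)$ with $\log(1+|x-y|)$, and simplify the bound $U(t,|x-y|,\phi,\Phi,\kappa)$ produced by Theorem \ref{thm-upper-bound} in the small-time regime.

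A direct computation gives $\Phi(s)=A\varepsilon s^{\varepsilon}$, so $\Phi(\infty)=\infty$; the map $s\mapsto\log\Phi(s)/s=(\log(A\varepsilon)+\varepsilon\log s)/s$ is decreasing on $[1,\infty)$ provided $A$ is chosen with $\log(A\varepsilon)\ge\varepsilon$ (a one-line derivative check); and $\Phi(s)/\phi(s)^{\gamma}\asymp s^{\varepsilon(1-\gamma)}$ is bounded for $\gamma=1$, verifying \eqref{sec00}. Since $\log\Phi(s)\asymp\log(1+s)$ for large $s$, the threshold $\theta_0|x-y|^2/\log\Phi(|x-y|)$ in Theorem \ref{thm-upper-bound} agrees with the threshold in the corollary after absorbing a universal constant into $\theta_0$. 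The first two regimes of the corollary (on-diagonal case $t\ge|x-y|^2$ and Gaussian middle regime) then follow immediately from Theorem \ref{thm-upper-bound}.

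For the small-time regime $t\le\theta_0|x-y|^2/\log\Phi(|x-y|)$, I will fix $\kappa\ge1$ so large that $\varepsilon\kappa/8\ge d+2+\varepsilon$. Then $\Phi(|x-y|/\kappa)^{\kappa/8}\asymp|x-y|^{\varepsilon\kappa/8}\ge|x-y|^{d+2+\varepsilon}$, so for all $t\ge t_0\ge 1$,
\[
\frac{c_1}{t^{d/2}\,\Phi(|x-y|/\kappa)^{\kappa/8}}\;\le\;\frac{c}{t^{d/2}\,|x-y|^{d+2+\varepsilon}}\;\le\;\frac{ct}{|x-y|^{d+2+\varepsilon}},
\]
where the last inequality uses $t\ge 1$. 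The remaining term in $U$, namely $c_1t/(|x-y|^{d+2}\phi(|x-y|/\kappa))\asymp ct/|x-y|^{d+2+\varepsilon}$, is already of the desired form. Adding the two contributions yields the third regime of the corollary.

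The main (and only modest) obstacle is bookkeeping: the multiplicative constant $A$ in $\phi$ must be chosen simultaneously to make $\phi>1$ on $[1,\infty)$ and to force $\log\Phi(s)/s$ to be decreasing on all of $[1,\infty)$ rather than merely eventually; and $\kappa$ must be taken large enough that the $\Phi$-decay in $U$ dominates $|x-y|^{d+2+\varepsilon}$. Once these are fixed, no deeper difficulty arises, and the corollary drops out of Theorem \ref{thm-upper-bound}.
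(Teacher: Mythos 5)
Your proposal is correct, and it follows essentially the same route the paper uses: pick the power-law $\phi$, note $\Phi(s)\asymp s^{\varepsilon}$ so that $\log\Phi(|x-y|)\asymp\log(1+|x-y|)$, verify Assumption (A), choose $\kappa$ large enough that $\varepsilon\kappa/8\ge d+2+\varepsilon$, and simplify $U(t,|x-y|,\phi,\Phi,\kappa)$ in the small-time regime using $t\ge t_0\ge 1$. The one genuine improvement you make is the normalizing constant $A$: the paper simply sets $\phi(r)=r^{\varepsilon}$, which has $\phi(1)=1$ (not $>1$) and, for small $\varepsilon$, makes $\log\Phi(s)/s=(\log\varepsilon+\varepsilon\log s)/s$ increasing near $s=1$, so Assumption (A) is not literally satisfied; your rescaling $\phi(r)=Ar^{\varepsilon}$ with $A$ large (so that $\log(A\varepsilon)\ge\varepsilon$) repairs this without affecting the bound, since $A$ is absorbed into the constant in \eqref{upp-2}. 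The rest of your computation — $\Phi(s)/\phi(s)^{\gamma}$ bounded at $\gamma=1$, and $\frac{1}{t^{d/2}\Phi(|x-y|/\kappa)^{\kappa/8}}\le\frac{ct}{|x-y|^{d+2+\varepsilon}}$ once $t\ge1$ and $\varepsilon\kappa/8\ge d+2+\varepsilon$ — is exactly the intended reduction.
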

\begin{proof}
In this case, $\phi(r)=r^\varepsilon$ and $\Phi(r)=\varepsilon r^{\varepsilon}$.
By taking $\kappa\ge 1$ so large enough that
$\varepsilon \kappa/8\ge d+2+\varepsilon$ in Theorem \ref{thm-upper-bound},
we obtain the desired assertion. \end{proof}

To study rate functions of the process $X$ corresponding to the test function $\phi(r)=\log^{1+\varepsilon}r$, we also need the following.
\begin{prop}\label{thm-upper-bound3}
Suppose that Assumption {\bf (A)} is satisfied.
Then for any $\delta\in(0,1)$, there exist positive constants $t_0$, $\theta_0$ and $c_1, c_2$ such that
$$p(t,x,y)\leq \frac{c_1t}{|x-y|^{d+2}\log^{(d+2)\delta/2}\log(e+\Phi(c_2|x-y|))}$$
for all $t\ge t_0$ and $x,y\in \R^d\backslash \N$ with
$$t_0\le t\le \frac{ \theta_0|x-y|^2}{\log\Phi(|x-y|)}.$$
\end{prop}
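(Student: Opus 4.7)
The plan is to re-run the Davies-plus-Meyer argument from the proof of Proposition~\ref{thm-upper-bound0}, but with the truncation parameter $\theta$ there chosen strictly greater than $3(d+2)/2$, and then to exploit Assumption \textbf{(A)} to convert the extra polynomial decay into the advertised $\log\log$ factor.

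I would fix a constant $\Theta>3(d+2)/2$ depending only on $d$ and $\delta$ (e.g.\ $\Theta=3(d+2)$), set $K=|x-y|/\Theta$ with $R=|x-y|$, and apply the Davies bound to the truncated Dirichlet form $({\cal E}^{(K)},{\cal F})$ with the same $\psi(x)=[\lambda(R-|x-y_0|)]\vee 0$ and $\lambda=\log(K^2/t)/(3K)$ used in the proof of Proposition~\ref{thm-upper-bound0}. The identical computation yields $p^{(K)}(t,x,y)\le c_1 t^{-d/2}(2t/K^2)^{\Theta/3}$ as long as $t<K^2$ (automatic in our range for $R$ large, since $\log\Phi(R)\to\infty$). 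Rewriting via $R=\Theta K$,
\[
p^{(K)}(t,x,y)\;\le\; c_2\,\frac{t}{R^{d+2}}\,(2t/K^2)^{\Theta/3-(d+2)/2}.
\]

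Next I invoke the upper bound on $t$: since $t\le\theta_0 R^2/\log\Phi(R)=\Theta^2\theta_0 K^2/\log\Phi(R)$, we have $2t/K^2\le 2\Theta^2\theta_0/\log\Phi(R)$, and because the exponent $\Theta/3-(d+2)/2$ is strictly positive this gives
\[
p^{(K)}(t,x,y)\;\le\;\frac{c_3}{(\log\Phi(R))^{\Theta/3-(d+2)/2}}\,\frac{t}{R^{d+2}}.
\]
A positive power of $\log\Phi(R)$ grows much faster than any fixed power of $\log\log(e+\Phi(R))$, so for $R$ large this upper bound is at most $c\,t/\bigl(R^{d+2}[\log\log(e+\Phi(c_2 R))]^{(d+2)\delta/2}\bigr)$, taking e.g.\ $c_2=1/\Theta$.

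For the Meyer correction, Assumption~\textbf{(A)} gives $\sup_{|x-y|\ge K}J(x,y)\le c/(K^{d+2}\phi(K))$, so
\[
t\sup_{|x-y|\ge K}J(x,y)\;\le\;\frac{c_4\Theta^{d+2} t}{R^{d+2}\phi(R/\Theta)}.
\]
By the hypothesis \eqref{sec00}, $\phi(R/\Theta)\ge(\Phi(R/\Theta)/C)^{1/\gamma}$, which dominates every fixed power of $\log\log(e+\Phi(R))$ once $R$ is large, so this Meyer term is also bounded by the target. The Meyer construction \eqref{e:meyer} then yields the claim for $R$ (equivalently, for $t$) large enough; taking $t_0$ sufficiently large makes the restriction $t\ge t_0$ force $R\ge R_0$ via $t\le\theta_0 R^2/\log\Phi(R)$.

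The main bookkeeping obstacle is to track how the positive exponent $\Theta/3-(d+2)/2$ (which one can increase freely by enlarging $\Theta$) competes with the $\Theta^{d+2}$, $\theta_0$, and $2^{\Theta/3}$ prefactors and with the $(d+2)\delta/2$ exponent in the target. This is manageable because $\log\Phi(R)\to\infty$ while $L=\log\log(e+\Phi(c_2 R))$ grows far more slowly, so no sharp optimization in $\Theta$ is needed---any fixed $\Theta>3(d+2)/2$ works uniformly for every $\delta\in(0,1)$, with $\theta_0,t_0$ and $c_1$ then chosen depending on $\Theta$ and $\delta$.
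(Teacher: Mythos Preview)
Your proposal is correct, and it follows a slightly different---and arguably cleaner---route than the paper.

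The paper keeps $\theta=3(d+2)$ but modifies the Davies parameter to
\[
\lambda=\frac{1}{3K}\log\!\left(\frac{K^2\log^{\delta}\log\Phi(K)}{t}\right),
\]
so that the $\log^{\delta}\log\Phi$ factor is baked into $E^{(K)}$ from the start. This produces an extra additive term $c_*\log^{\delta}\log\Phi(K)$ in the exponent, which is then absorbed (using $\delta<1$ and the assumed range $t\le c_0K^2/\log\Phi(K)$) by half of the main negative term; the rest of the argument, including the Meyer correction via \eqref{sec00}, is the same as yours. Your approach instead reuses the simpler $\lambda=\log(K^2/t)/(3K)$ from Proposition~\ref{thm-upper-bound0}, takes $\Theta>3(d+2)/2$ to create surplus decay $(t/K^2)^{\Theta/3-(d+2)/2}$, and then cashes that surplus in as a negative power of $\log\Phi(R)$---a strictly stronger factor than the stated $\log\log$ one. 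The advantage of your route is that no new $\lambda$ needs to be analyzed and the condition $\delta<1$ never enters the Davies step (it is only used, trivially, when comparing $(\log\Phi)^{\text{positive}}$ with $(\log\log\Phi)^{(d+2)\delta/2}$). The paper's route has the minor virtue of landing directly on the exponent $(d+2)\delta/2$ without an intermediate comparison. Either argument yields the proposition; your version in fact shows that the denominator can be upgraded to a positive power of $\log\Phi(c_2|x-y|)$.
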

\begin{proof}
For fixed points $x_0, y_0\in {\mathbb R}^d$ and $\theta>0$,
we let $R=|x_0-y_0|$ and $K=R/\theta$.
Since $t_0$ can be large enough,
we may and do assume that $R$ is large enough.
We use the approach of Proposition \ref{thm-upper-bound0}
and start from the estimate \eqref{upper-exp}.
Taking
$$
\lambda= \frac{1}{3K} \log\left (\frac{K^2 \log^\delta \log \Phi(K)}{t}\right),$$ we have
$$E^{(K)}(t,x_0,y_0)\le - \frac{\theta}{3} \log \left(\frac{K^2 \log^\delta \log \Phi(K)}{t}\right)
+ c_* \log^\delta\log \Phi(K),$$
where $c_*$ is the constant $c_5$ in \eqref{upper-exp}.
If
$$t\le \frac{ c_0 K^2}{\log\Phi(K)}$$
for some $c_0>0$, then for $K\ge 1$ large enough,
$$ \frac{\theta}{6}\log \left(\frac{K^2 \log^\delta \log \Phi(K)}{t}\right)
\ge \frac{\theta}{6}\log \left(\frac{\log \Phi(K) \log^\delta \log \Phi(K)}{c_0}\right)
\ge  c_* \log^\delta\log \Phi(K),$$
due to the fact that $\delta\in(0,1)$.
Hence, for $K\ge 1$ large enough, we have
$$E^{(K)}(t,x_0,y_0)\le - \frac{\theta}{6} \log \left(\frac{K^2 \log^\delta \log \Phi(K)}{t}\right),$$
which along with Proposition \ref{P:da} yields that
\begin{equation*}
\begin{split}
p^{(K)}(t,x_0,y_0)
&\le c_1 t^{-d/2} \exp\left(-\frac{\theta}{6} \log \left(\frac{K^2 \log^\delta \log \Phi(K)}{2t}\right)\right)\\
&= c_1 t^{-d/2}\left(\frac{2t}{K^2 \log^\delta \log \Phi(K)}\right)^{\theta/6}.
\end{split}
\end{equation*}
Setting $\theta= 3(d+2)$, we get
$$p^{(K)}(t,x_0,y_0)\le c_2 \frac{t}{K^{d+2} \log^{(d+2)\delta/2}\log \Phi(K)}.$$
This along with the first inequality in \eqref{e:meyer}, Assumption {\bf (A)} and the fact that $|x_0-y_0|=\theta K$ gives us that
\begin{align*}
p(t,x_0,y_0)
&\leq p^{(K)}(t,x_0,y_0)+t\sup_{|x-y|\geq K}J(x,y)\\
&\leq c_3 \frac{t}{|x_0-y_0|^{d+2} \log^{(d+2)\delta/2}\log \Phi(c_4|x_0-y_0|)}.
\end{align*}
The proof is complete. \end{proof}

\subsection{Heat kernel lower bound}
In this subsection, we establish the following lower bound estimates for the heat kernel.
\begin{thm}\label{T:lower}
Under Assumption {\bf(B)}, there exist positive constants $t_0$ and $c_i$  $(i=1,2,3)$ such that
for all $t\ge t_0$ and $x,y\in \R^d\backslash \N$,
$$p(t,x,y)\geq
\begin{cases}
c_1t^{-d/2} & |x-y|^2\leq t, \\
\displaystyle c_1t^{-d/2}\exp\left(-\frac{c_2|x-y|^2}{t}\right) &
c_3|x-y|\leq t\leq |x-y|^2.
\end{cases}$$
\end{thm}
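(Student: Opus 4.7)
My plan is to follow the standard four-stage strategy for Gaussian lower bounds on the heat kernel: a tail estimate from the off-diagonal upper bound of Proposition \ref{thm-upper-bound0}, an on-diagonal lower bound via the semigroup identity $p(2t,x,x)=\int p(t,x,z)^2\,dz$, a near-diagonal upgrade, and finally a Chapman-Kolmogorov chaining argument for the moderate off-diagonal regime. For the tail estimate, integrating the upper bound $p(s,x,z)\le c\,s/|x-z|^{d+2}$ over $\{|z-x|\ge A\sqrt{s}\}$ (valid since $A\sqrt{s}\ge 1$ gives $s\le |z-x|^2$) gives $\Pp^x(|X_s-x|\ge A\sqrt{s})\le c'/A^2$, so that $\Pp^x(|X_s-x|\le A\sqrt{s})\ge 3/4$ for $A$ chosen sufficiently large and all $s\ge t_0$.

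Cauchy-Schwarz on the ball $B(x,A\sqrt{t})$ then yields
\[
p(2t,x,x)=\int p(t,x,z)^2\,dz\ge \frac{1}{|B(x,A\sqrt{t})|}\left(\int_{B(x,A\sqrt{t})}p(t,x,z)\,dz\right)^2\ge \frac{c_1}{t^{d/2}},
\]
uniformly in $x\in\R^d\setminus\N$ and for $t\ge t_0$. To upgrade this to the near-diagonal bound $p(t,x,y)\ge c\,t^{-d/2}$ on $\{|x-y|^2\le t\}$, I would use the symmetry of $p$ together with the polarization identity
\[
p(2t,x,y)=\tfrac{1}{2}\bigl(p(2t,x,x)+p(2t,y,y)\bigr)-\tfrac{1}{2}\|p(t,x,\cdot)-p(t,y,\cdot)\|_2^2,
\]
which reduces the task to an $L^2$ modulus-of-continuity estimate for the heat kernel in its spatial variable. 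Such an estimate can be obtained either from Hölder regularity of parabolic functions for this class of jump processes (cf.\ \cite{BBCK09, BaKu08}) or, alternatively, by a short self-contained Chapman-Kolmogorov argument with a bounded number of steps, starting from the on-diagonal bound just proved.

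For the moderate off-diagonal regime $c_3|x-y|\le t\le |x-y|^2$, I would set $n:=\lceil c_\ast |x-y|^2/t\rceil$, take equispaced points $z_i:=x+(i/n)(y-x)$, and balls $B_i:=B(z_i,\rho)$ of radius $\rho\asymp \sqrt{t/n}$; the hypothesis $c_3|x-y|\le t$ with $c_3$ chosen sufficiently large is precisely what guarantees the step time satisfies $t/n\ge t_0$, so that the near-diagonal bound from the previous step applies on each leg. Iterating Chapman-Kolmogorov,
\[
p(t,x,y)\ge \int_{B_1}\cdots\int_{B_{n-1}}\prod_{i=1}^{n}p(t/n,z_{i-1}',z_i')\,dz_1'\cdots dz_{n-1}',
\]
with $z_0':=x$ and $z_n':=y$, and bounding each factor by $c(t/n)^{-d/2}$ and each volume by a constant multiple of $(t/n)^{d/2}$, produces a lower bound of the form $C_1 C_2^{\,n}(t/n)^{-d/2}$ for some fixed constants $C_1, C_2>0$. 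Writing $C_2^{\,n}=\exp(-|\log C_2|\,n)$ with $n\asymp |x-y|^2/t$ and using $(t/n)^{-d/2}\ge t^{-d/2}$ (since $n\ge 1$) delivers the desired Gaussian lower bound $c_1 t^{-d/2}\exp(-c_2|x-y|^2/t)$. I expect the near-diagonal upgrade to be the most delicate part of the argument: because the jumping kernel is not translation invariant, ensuring the uniformity in the base point of all constants coming from Proposition \ref{thm-upper-bound0} and keeping the chains of integration inside $\R^d\setminus\N$ will both require some care.
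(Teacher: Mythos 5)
Your tail estimate, the on-diagonal lower bound via Cauchy--Schwarz, and the Chapman--Kolmogorov chaining for the regime $c_3|x-y|\le t\le |x-y|^2$ are all correct and coincide with step (2) of the paper's proof. The genuine gap is the near-diagonal upgrade from $p(2t,x,x)\ge ct^{-d/2}$ to $p(t,x,y)\ge ct^{-d/2}$ for $|x-y|^2\le t$. Your alternative (b) -- a bounded-step Chapman--Kolmogorov argument starting only from the on-diagonal bound and the tail estimate -- cannot work: from $\int_{B(x,A\sqrt t)}p(t,x,z)\,\d z\ge 3/4$ together with the upper bound $p(t,x,z)\le Mt^{-d/2}$ you only learn that $p(t,x,\cdot)$ exceeds $\asymp t^{-d/2}$ on \emph{some} subset of $B(x,A\sqrt t)$ of measure $\asymp t^{d/2}$, but that subset occupies only a fraction $\asymp A^{-d}$ of the ball and you cannot locate it. Two such ``good'' sets, one for $p(t/2,x,\cdot)$ and one for $p(t/2,\cdot,y)$, need not intersect, so the semigroup identity alone produces no pointwise lower bound away from the diagonal; some spatial regularity of the kernel is unavoidable. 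Your alternative (a), Hölder regularity of parabolic functions, would indeed close the gap, but it is not a free citation here: the Hölder estimates in \cite{BBCK09} are established under structural hypotheses beyond Assumption {\bf (B)}, \cite{BaKu08} treats conductance models on $\mathbb{Z}^d$, and in the present setting $p(t,\cdot,y)$ is only defined and quasi-continuous along an $\E$-nest off a properly exceptional set (Theorem \ref{l:upp}), so a pointwise modulus of continuity requires a proof.

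The paper goes a different way for precisely this reason. It replaces $J$ by the truncated kernels $J^{(\delta)}$ of \eqref{eq-j-delta}, which are genuinely $\alpha_2$-stable near the diagonal and whose Dirichlet forms have the full Sobolev space as domain, so the exceptional-set issues are controlled. It then runs the Davies/Fabes--Stroock weighted log-Sobolev argument (following \cite{CKK08,Fo09,BBCK09}): Lemma \ref{lem-g'} gives the differential inequality $H_{\varepsilon}'(t)\ge -c_1+c_2\int_{B_1}(\log u_r^{\varepsilon}-H_{\varepsilon})^2\Phi$ via the weighted Poincar\'e inequality; Lemma \ref{lem-dist} (an exit-time estimate, built from the same Proposition \ref{thm-upper-bound0} you use) shows that $u_r(t,\cdot)$ keeps enough mass on a small ball; combining these and integrating gives $H_{\varepsilon}(t_1)\ge -c$, and Jensen's inequality then yields the uniform-in-$\delta$ near-diagonal bound $q^{\delta}(t,x,y)\ge c_0t^{-d/2}$ of Proposition \ref{thm-lower-1}. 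Finally Mosco convergence of $(\E^{\delta_n},\F^{\delta_n})$ to $(\E,\F)$ transfers this to $p(t,x,y)$. If you wish to keep the polarization route you would have to supply the parabolic Hölder estimate for this class of variable-order kernels; otherwise the log-Sobolev machinery on the truncated process, as in the paper, is the missing ingredient in your plan.
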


We first explain the main idea of the proof of Theorem \ref{T:lower}.
Following the approach of \cite{BBCK09}, we introduce a class of modifications for the jumping kernel $J(x,y)$.
Let $\kappa_2$ be the constant in \eqref{A:jump-kernel}.
For $\delta\in (0,1)$, define
\begin{equation}\label{eq-j-delta}
J^{(\delta)}(x,y):=J(x,y){\bf 1}_{\{|x-y|\geq \delta\}}+\frac{\kappa_2}{|x-y|^{d+\alpha_2}}{\bf 1}_{\{0<|x-y|<\delta\}}
\end{equation}
and
$${\cal D}^{\delta}:=\left\{u\in L^2({\mathbb R}^d;\d x) \bigg|
\iint_{x\ne y}(u(x)-u(y))^2J^{(\delta)}(x,y)\,{\rm d}x\,{\rm d}y<\infty\right\}.$$
Then by Assumption \ref{assum:jump}, we have for any $\delta\in(0,1)$
\begin{equation*}
\begin{split}
\iint_{\{|x-y|\geq \delta \}}(u(x)-u(y))^2J(x,y)\,{\rm d}x\,{\rm d}y
&\leq 4\int u(x)^2\left(\int_{\{|x-y|\geq \delta\}}J(x,y)\,{\rm d}y\right)\,{\rm d}x\\
&\leq c_1(\delta)\int u(x)^2\,{\rm d}x
\end{split}
\end{equation*}
and so
\begin{equation}\label{eq-compare}\begin{split}
\iint_{x\ne y}(u(x)-u(y))^2J^{(\delta)}(x,y)\,{\rm d}x\,{\rm d}y&
+\|u\|_{L^2({\mathbb R}^d;\d x)}^2\\
&\asymp
\iint_{x\ne y}\frac{(u(x)-u(y))^2}{|x-y|^{d+\alpha_2}}\,{\rm d}x\,{\rm d}y
+\|u\|^2_{L^2({\mathbb R}^d;\d x)}.\end{split}
\end{equation}
Therefore, for all $\delta\in (0,1)$,
$${\cal D}^{\delta}
=\left\{u\in L^2({\mathbb R}^d;\d x) \bigg|
\iint_{x\ne y}\frac{(u(x)-u(y))^2}{|x-y|^{d+\alpha_2}}\,{\rm d}x\,{\rm d}y<\infty \right\};$$
that is, ${\cal D}^{\delta}$ is independent of $\delta\in (0,1)$.

Let $({\cal E}^{\delta},{\cal D}^{\delta})$ be a bilinear form on $L^2({\mathbb R}^d;\d x)$ given by
$${\cal E}^{\delta}(u,v)=\iint_{{\mathbb R}^d\times {\mathbb R}^d}(u(x)-u(y))(v(x)-v(y)) J^{(\delta)}(x,y)\,{\rm d}x\,{\rm d}y,
\quad \text{$u,v\in {\cal D}^{\delta}$},$$
and let ${\cal F}^{\delta}$ be the closure of $C_c^{\rm lip}(\R^d)$
with respect to the norm $\|f\|_{\E_1^{\delta}}:=\sqrt{\E^{\delta}(f,f)+\|f\|_2^2}$ in ${\cal D}^{\delta}$.
Then, $({\cal E}^{\delta},{\cal F}^{\delta})$ is a regular Dirichlet form on $L^2({\mathbb R}^d;\d x)$.
Moreover, according to (\ref{eq-compare}) and the argument of \cite[Lemma 2.5]{BBCK09},
we have ${\cal F}^{\delta}={\cal D}^{\delta}$.

Associated with the regular Dirichlet form $({\cal E}^{\delta}, {\cal F}^{\delta})$ is
a symmetric Hunt process $Y^{\delta}=(\{Y_t^{\delta}\}_{t\geq 0}, \{\Pp^x\}_{x\in \R^d\setminus\N})$
with state space $\R^d\backslash \N_\delta$,
where $\N_\delta\subset \R^d$ is a properly exceptional set for $({\cal E}^{\delta}, {\cal F}^{\delta})$.
By \cite[Main result]{MU11} the process $Y^{\delta}$ is conservative.
We also see from
Theorem \ref{l:upp} that there exists
a non-negative kernel $q^{\delta}(t,x,y)$
on $(0,\infty)\times ({\mathbb R}^d\setminus {\cal N}_{\delta})\times ({\mathbb R}^d\setminus {\cal N}_{\delta})$
such that for any non-negative function $f$ on ${\mathbb R}^d$,
$$\Ee^x f(Y_t^{\delta})=\int_{{\mathbb R}^d}q^{\delta}(t,x,y)f(y)\,{\rm d}y,
\quad \text{$t>0$ and $x\in {\mathbb R}^d\setminus {\cal N}_{\delta}$}$$
and there is a constant
$c_2>0$ such that
\begin{equation}\label{e:upper1}
q^{\delta}(t,x,y)\leq
c_2(t^{-d/2}\vee t^{-d/\alpha_1}), \quad
\text{$t>0$ and $x,y\in {\mathbb R}^d\setminus {\cal N}_{\delta}$}.
\end{equation}
Moreover, there exists an ${\cal E}^{\delta}$-nest $\{F_k^{\delta}\}_{k\geq 1}$ of compact sets such that
$${\cal N}_{\delta}={\mathbb R}^d\setminus \bigcup_{k=1}^{\infty}F_k^{\delta}$$
and for each fixed  $t>0$ and $y\in {\mathbb R}^d\setminus {\cal N}_{\delta}$, the map
$x\mapsto q^{\delta}(t,x,y)$ is continuous on each $F_k^{\delta}$.
Here we should note that the constant $c_2$ in \eqref{e:upper1} can be chosen to be independent of $\delta\in (0,1)$.
Indeed, by the definition of $J^{(\delta)}(x,y)$,
$$J^{(\delta)}(x,y)\geq
\frac{\kappa_1}{|x-y|^{d+\alpha_1}}{\bf 1}_{\{|x-y|<1\}}+J(x,y){\bf 1}_{\{|x-y|\geq 1\}}=:J_l(x,y)$$
for any $\delta\in(0,1)$ and $x,y\in \R^d$.
Then by following the argument of \cite[Theorem 1.2]{BBCK09} and \cite[Proposition 3.1]{CKK11},
we see that $c_2$ can be determined by $J_l(x,y)$, which is independent of $\delta$.

Actually, under Assumption {\bf(B)},
we can also get the following near-diagonal lower bound of $q^{\delta}(t,x,y)$,
which is the key to Theorem \ref{T:lower}.

\begin{prop}\label{thm-lower-1}
Under Assumption {\bf(B)}, there exist constants $t_0>0$ and $c_0=c_0(t_0)>0$,
which are independent of $\delta\in (0,1)$, such that
for any $t\geq t_0$ and $x,y\in {\mathbb R}^d\setminus {\cal N}_{\delta}$ with $|x-y|^2\leq t$,
$$q^{\delta}(t,x,y)\geq c_0t^{-d/2}.$$
\end{prop}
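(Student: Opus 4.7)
The plan is the classical three-step strategy for a near-diagonal heat kernel lower bound: a non-escape estimate for $Y^\delta$, an on-diagonal lower bound via Cauchy--Schwarz, and an extension to the near-diagonal regime. The key additional requirement here is that all constants must be uniform in $\delta\in(0,1)$. First, I would show that there exist $M>0$ and $t_0>0$, independent of $\delta$, such that $\Pp^x(|Y_t^\delta-x|\geq M\sqrt{t})\leq 1/4$ for all $t\geq t_0$ and $x\in\R^d\setminus\N_\delta$. The approach is to apply Dynkin's formula to a smooth cutoff $\phi\in C_c^{\rm lip}(\R^d)$ vanishing on $B(x,M\sqrt{t}/2)$ and equal to $1$ outside $B(x,M\sqrt{t})$, and to derive a pointwise bound on the generator of $Y^\delta$ applied to $\phi$ of order $C/(M^2 t)$, uniform in $\delta$. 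This rests on three inputs: (i) the uniform finite second moment $\sup_z\int|w-z|^2 J^{(\delta)}(z,w)\,\d w\leq C$ (from Assumption~\ref{assum:jump}(ii) and \eqref{sec}); (ii) the radial symmetry of the added kernel $\kappa_2|w-z|^{-d-\alpha_2}$ on $\{|w-z|<\delta\}$, which kills the drift contribution of the smallest jumps by odd symmetry; and (iii) the tail bound \eqref{big-jump} for large jumps. Chebyshev then yields $\Pp^x(|Y_t^\delta-x|\geq M\sqrt{t})\leq C/M^2$, and one picks $M$ large.

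For the on-diagonal bound, the symmetry of $q^\delta$ and the conservativeness of $Y^\delta$ (from \cite{MU11}) give $q^\delta(2t,x,x)=\int_{\R^d}q^\delta(t,x,z)^2\,\d z$. Restricting the integral to $B=B(x,M\sqrt{t})$ and applying Cauchy--Schwarz with the non-escape estimate yield
$$q^\delta(2t,x,x)\geq \frac{\Pp^x(Y_t^\delta\in B)^2}{|B|}\geq \frac{(3/4)^2}{c_d M^d t^{d/2}},$$
uniformly in $\delta$. To extend to $|x-y|^2\leq t$, I would use the polarization identity
$$q^\delta(2t,x,y)=\tfrac{1}{2}\bigl(q^\delta(2t,x,x)+q^\delta(2t,y,y)-\|q^\delta(t,x,\cdot)-q^\delta(t,y,\cdot)\|_2^2\bigr)$$
together with the on-diagonal bound and an $L^2$-continuity estimate for $x\mapsto q^\delta(t,x,\cdot)$ (which can be extracted, uniform in $\delta$, from a H\"older-regularity argument of the type in \cite{BBCK09}). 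Equivalently, one restricts the Chapman--Kolmogorov representation to $A=B(x,M\sqrt{t})\cap B(y,M\sqrt{t})$, where by Step~1 (with an enlarged $M$) both $\int_A q^\delta(t,x,z)\,\d z$ and $\int_A q^\delta(t,y,z)\,\d z$ are at least $3/4$, and combines this with the upper bound \eqref{e:upper1}.

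The main obstacle is the non-escape estimate itself. When $\alpha_2<1$, the drift term $\int|w-z|J^{(\delta)}(z,w)\,\d w$ is bounded uniformly in $\delta$ and Dynkin's formula goes through directly. For $\alpha_2\geq 1$, however, this na\"ive bound diverges as $\delta\to 0$, and one must exploit the radial symmetry of the smallest-jump part of $J^{(\delta)}$ (so that its linear contribution vanishes and only the second-order, quadratic term remains) via a Meyer-type decomposition separating jumps of size $<\delta$, $[\delta,1]$, and $>1$. Getting the constants to be genuinely $\delta$-independent is the crucial technical point; the role of the modification in \eqref{eq-j-delta} is precisely to regularize this drift while preserving the form-level comparability \eqref{eq-compare} that transfers the upper-bound estimates uniformly.
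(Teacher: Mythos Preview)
Your strategy is genuinely different from the paper's. The paper does \emph{not} pass through an on-diagonal Cauchy--Schwarz bound at all; it works directly with the Dirichlet heat kernel on the scaled unit ball and proves, via a weighted Poincar\'e inequality (Lemma~\ref{lem-g'}), a differential inequality for $H_\varepsilon(t)=\int_{B_1}\Phi(y)\log u_r^\varepsilon(t,y)\,\d y$. Combined with the non-escape Lemma~\ref{lem-dist} (which is obtained from the off-diagonal \emph{upper} bounds of Proposition~\ref{thm-upper-bound0}, not from Dynkin), this yields a uniform lower bound on $\int_{B_1}\Phi\log q_r^{\delta,B_1}(t,\cdot,x_1)$, and Jensen plus iteration of Chapman--Kolmogorov finishes. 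The point of this machinery is precisely that it delivers the \emph{near-diagonal} bound directly, without a separate regularity step.

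Your proposal has two real gaps. The first is in Step~1. The radial symmetry of $J^{(\delta)}$ only holds on $\{|w-z|<\delta\}$; on $\{\delta\le|w-z|<1\}$ the kernel is just $J(z,w)$, which need not be radial. Hence the first-order term $\nabla\phi(z)\cdot\int_{\delta\le|w-z|<1}(w-z)J(z,w)\,\d w$ in $L^\delta\phi(z)$ is controlled only by $\|\nabla\phi\|_\infty\int_\delta^1 r^{-\alpha_2}\,\d r$, which blows up as $\delta\to0$ when $\alpha_2\ge1$. Your Meyer decomposition into $<\delta$, $[\delta,1]$, $>1$ does not cure this; the middle piece is the problem, and nothing in your assumptions gives it a cancellation. (This is fixable: apply Proposition~\ref{thm-upper-bound0} to $J^{(\delta)}$ itself, noting that the constants in its proof depend only on the data in Assumption~{\bf (B)}, which $J^{(\delta)}$ satisfies uniformly. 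That is exactly how the paper proves Lemma~\ref{lem-dist}.)

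The more serious gap is Step~3. Your option~(b) does not yield a lower bound: knowing that $\int_A q^\delta(t,x,z)\,\d z\ge 3/4$ and $\int_A q^\delta(t,y,z)\,\d z\ge 3/4$ together with the pointwise upper bound \eqref{e:upper1} gives you sets $A_x,A_y\subset A$ of measure $\gtrsim c_2^{-1}t^{d/2}$ on which $q^\delta(t,x,\cdot)$ and $q^\delta(t,y,\cdot)$ respectively are $\gtrsim t^{-d/2}$, but since $|A|\asymp M^d t^{d/2}$ with $M$ large, there is no reason for $A_x\cap A_y\neq\emptyset$; indeed one can build densities $f,g$ with all of these properties and $\int fg=0$. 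Your option~(a) reduces to proving an $L^2$-continuity (equivalently, H\"older) estimate for $x\mapsto q^\delta(t,x,\cdot)$ that is \emph{uniform in $\delta$}. This is not a corollary one can just ``extract'' from \cite{BBCK09}; it is essentially equivalent in difficulty to the Proposition itself, and the weighted-Poincar\'e argument in Lemma~\ref{lem-g'} is precisely the mechanism the paper uses to obtain it. So as written, Step~3 assumes what has to be proved.
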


We will prove Proposition \ref{thm-lower-1} later,
and present the proof of Theorem \ref{T:lower} first.

\begin{proof}[Proof of Theorem $\ref{T:lower}$]
(1) We first claim that there exist an ${\cal E}$-properly exceptional set ${\cal N}$ and
constants $t_0,c_0>0$ such that
for any $t\geq t_0$ and $x,y\in {\mathbb R}^d\setminus {\cal N}$ with $|x-y|^2\leq t$,
$$p(t,x,y)\geq c_0t^{-d/2}.$$
Indeed, let $\{\delta_n\}_{n=1}^{\infty}$ be a decreasing sequence in $(0,1)$
such that $\delta_n \rightarrow 0$ as $n\rightarrow\infty$.
Then, by \cite[p.1969, Theorem 2.3]{BBCK09},
$({\cal E}^{\delta_n},{\cal F}^{\delta_n})$ converges to $({\cal E}, {\cal F})$
in the sense of Mosco as $n\rightarrow \infty$.
Since $J^{(\delta)}(x,y)\geq J(x,y)$
by definition, we have
${\cal F}^{\delta}\subset {\cal F}$ and
$${\cal E}^{\delta}(u,u)\geq {\cal E}{(u,u)} \quad \text{for any $u\in {\cal F}^{\delta}$}.$$
Therefore, any ${\cal E}^\delta$-exceptional set can be regarded as an ${\cal E}$-exceptional set.
Namely, we can choose an ${\cal E}$-exceptional set ${\cal N}$
so that $\bigcup_{n=1}^{\infty}{\cal N}_{\delta_n}\subset {\cal N}$.
On account of this, the desired assertion follows from Proposition \ref{thm-lower-1}
and \cite[p.1990--1991, Proof of Theorem 1.3]{BBCK09}.

(2) Next, we prove Theorem $\ref{T:lower}$ by following the argument of \cite[Theorem 3.6]{CKK08}.
Note that if $t\geq t_0$ and $|x-y|^2\leq t$, then our assertion follows from (1).
In what follows, we assume that $\sqrt{t_0}|x-y|\leq t\leq |x-y|^2$.

Let $l$ be the maximum of positive integers such that
$$\frac{t}{l}\leq \left(\frac{|x-y|}{l}\right)^2.$$
Since
\begin{equation}\label{eq-l}
\frac{|x-y|^2}{t}-1\leq l\leq \frac{|x-y|^2}{t},
\end{equation}
we have
\begin{equation}\label{eq-space-time}
\frac{1}{2}\left(\frac{|x-y|}{l}\right)^2\leq \frac{t}{l}\leq \left(\frac{|x-y|}{l}\right)^2
\end{equation}
and
\begin{equation}\label{eq-lower-time}
\frac{t}{l}\geq \frac{t^2}{|x-y|^2}\geq t_0.
\end{equation}

Let $\{x_i\}_{0\le i\le 6l}$ be a sequence on the line segment
joining $x_0=x$ and $x_{6l}=y$ such that
\begin{equation}\label{eq-sequence}
|x_k-x_{k-1}|=\frac{|x-y|}{6l} \quad \text{for any $k=1,\dots, 6l$}.
\end{equation}
Take a sequence $\{y_i\}_{0\le i\le 6l}$ such that
$y_0=x$, $y_{6l}=y$ and $y_k\in B(x_k, (6l)^{-1}|x-y|)$ for all $1\leq k\leq 6l-1.$  Then,  \eqref{eq-sequence} and \eqref{eq-space-time} imply that for any $1\leq k\leq 6l$,
$$
|y_k-y_{k-1}|
\leq |y_k-x_k|+|x_k-x_{k-1}|+|x_{k-1}-y_{k-1}|
\leq 3\cdot \frac{|x-y|}{6l}
=\frac{|x-y|}{2l}
\leq \sqrt{\frac{t}{l}}.
$$
Hence by  \eqref{eq-lower-time} and (1),
there exists a constant
$C=C(t_0)\in (0,1)$ such that
$$p\left(\frac{t}{l},y_{k-1},y_k\right)\geq C\left(\frac{t}{l}\right)^{-d/2}, \quad 1\leq k\leq 6l.$$
This, together with the Markov property of $p(t,x,y)$, implies that
\begin{equation*}
\begin{split}
p(t,x,y)
&=\int_{{\mathbb R}^d}\cdots
\int_{{\mathbb R}^d}p(t/l,x,y_1)\cdots p(t/l,y_{6l-1},y)\,{\rm d}y_1\cdots\,{\rm d}y_{6l-1}\\
&\geq \int_{B(x_1,(6l)^{-1}|x-y|)}\cdots
\int_{B(x_{6l-1},(6l)^{-1}|x-y|)}p(t/l,x,y_1)\cdots p(t/l,y_{6l-1},y)\,{\rm d}y_1\cdots\,{\rm d}y_{6l-1}\\
&\geq
C\left(\frac{t}{l}\right)^{-d/2}
\prod_{k=1}^{6l-1}\left\{C\left(\frac{t}{l}\right)^{-d/2}
|B(x_k,(6l)^{-1}|x-y|)|\right\}\\
&\geq c_1\left(\frac{t}{l}\right)^{-d/2}C^{6l},
\end{split}
\end{equation*}
where in the second inequality $|\cdot|$ denotes the $d$-dimensional Lebesgue measure, and the last inequality follows from \eqref{eq-space-time}.
Note that, by \eqref{eq-l}, we have
$$
C^{6l}\geq e^{-c_2l}
\geq \exp\left(-c_2\frac{|x-y|^2}{t}\right),$$ which, along with the estimate above, yields
the desired assertion.
\end{proof}

\ \

The remainder of this subsection is devoted to the proof of Proposition \ref{thm-lower-1}.
For this, we need Lemmas \ref{lem-g'} and  \ref{lem-dist} below.
These two lemmas are concerned with a class of scaled processes for the subprocess of $Y^{\delta}$ on a ball.

We begin with some results which are due to \cite{BBCK09,CKK08,CK10,Fo09}.
Let $B(x,r)$ be an open ball with radius $r>0$ centered at $x\in {\mathbb R^d}$, and $B_r=B(0,r)$.
Denote by $Y^{\delta,B_r}$ the subprocess of $Y^{\delta}$ on $B_r$.
Let  $q^{\delta,B_r}(t,x,y)$ and $({\cal E}^{\delta,B_r}, {\cal F}^{\delta,B_r})$ be
the heat kernel (also called Dirichlet heat kernel in the literature)
and the regular Dirichlet form associated with $Y^{\delta,B_r}$, respectively.

For a fixed $r>0$, define $$Y_t^{\delta,(r)}:=r^{-1}Y_{r^2 t}^{\delta}.$$
Then $Y^{\delta,(r)}=\left(\left\{Y_t^{\delta, (r)}\right\}_{t\geq 0}, \{\Pp^x\}_{x\in {\mathbb R}^d\backslash \N_\delta}\right)$
is a symmetric Hunt process on ${\mathbb R}^d\backslash \N_\delta$ such that
the associated  Dirichlet form $({\cal E}^{\delta,(r)}, {\cal F}^{\delta,(r)})$ on $L^2({\mathbb R}^d;\d x)$
is given by
$${\cal E}^{\delta,(r)}(u,v)
=\iint_{{\mathbb R}^d\times {\mathbb R}^d}(u(x)-u(y))(v(x)-v(y))r^{d+2}
J^{(\delta)}(rx,ry)\,{\rm d}x\,{\rm d}y$$ and
$${\cal F}^{\delta,(r)}
=\left\{u\in L^2({\mathbb R}^d;\d x) \Big|
\iint_{{\mathbb R}^d\times {\mathbb R}^d}\frac{(u(x)-u(y))^2}{|x-y|^{d+\alpha_2}}
\,{\rm d}x\,{\rm d}y<\infty\right\}.$$
Moreover, the associated heat kernel $q_r^{\delta}(t,x,y)$ satisfies
\begin{equation}\label{eq-heat-scale}
q_r^{\delta}(t,x,y)=r^d q^{\delta}(r^2t,rx,ry).
\end{equation}
Let $Y^{\delta,(r), B_1}$ be the subprocess of $Y^{\delta,(r)}$ on $B_1$.
Then the associated Dirichlet heat kernel $q_r^{\delta,B_1}(t,x,y)$ is given by
$$q_r^{\delta, B_1}(t,x,y)=r^dq^{\delta, B_r}(r^2t,rx,ry),\quad
\text{$t>0$ and $x,y\in B_1\setminus{\cal N}_{\delta}$}.$$
We denote by $({\cal E}^{\delta,(r),B_1}, {\cal F}^{\delta,(r), B_1})$
the associated regular Dirichlet form on $L^2(B_1;{\rm d}x)$.

In the following, let
$$\Phi(x)=C_{\Phi}(1-|x|^2)^{\frac{12}{2-\alpha_2}}{\bf1}_{B_1}(x),\quad x\in \R^d$$
for some constant $C_{\Phi}>0$ so that $\int_{B_1}\Phi(x)\,\d x=1$.
For each fixed $x_1\in B_1\setminus {\cal N}$, $r\geq 1$ and $\varepsilon\in (0,1)$, define
$$u_r(t,x):=q_r^{\delta,B_1}(t,x,x_1), \quad u_r^{\varepsilon}(t,x):=u_r(t,x)+\varepsilon$$
and
$$H_{\varepsilon}(t):=\int_{B_1}\Phi(y)\log u_r^{\varepsilon}(t,y)\,{\rm d}y.$$

\begin{prop}\label{p:domain}
Under Assumption {\bf(B)}, the next two assertions hold.
\begin{enumerate}
\item For each $t>0$, the function $\Phi(\cdot)/u_r^{\varepsilon}(t,\cdot)$
belongs to $\F^{\delta,(r),B_1}$.
\item The function $H_{\varepsilon}(t)$ is differentiable on $(0,\infty)$ and for each $t>0$,
\begin{equation}\label{e:deriv}
H_{\varepsilon}'(t)=
-{\cal E}^{\delta,(r),B_1}\left(u_r(t,\cdot),\frac{\Phi(\cdot)}{u_r^{\varepsilon}(t,\cdot)}\right).
\end{equation}
\end{enumerate}
\end{prop}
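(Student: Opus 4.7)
The plan is to reduce both parts to standard facts about the Dirichlet heat kernel $u_r=q_r^{\delta,B_1}(\cdot,\cdot,x_1)$ together with the high-order vanishing of $\Phi$ at $\partial B_1$. Recall that $u_r(t,\cdot)\in\F^{\delta,(r),B_1}$ for every $t>0$ and $u_r^{\varepsilon}\ge\varepsilon>0$, while $u_r$ is uniformly bounded on $B_1$ for each fixed $t$ by \eqref{e:upper1} after rescaling. In particular $\Phi/u_r^{\varepsilon}$ is bounded and vanishes outside $B_1$, so it lies in $L^2(B_1)$ automatically.

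For (1) the real task is to bound $\E^{\delta,(r)}(\Phi/u_r^{\varepsilon},\Phi/u_r^{\varepsilon})$. I would split the double integral into the pieces over $B_1\times B_1$ and $B_1\times B_1^c$ (the $B_1^c\times B_1^c$ part vanishes). On $B_1\times B_1$ I would use the identity
$$\frac{\Phi(x)}{u_r^{\varepsilon}(t,x)}-\frac{\Phi(y)}{u_r^{\varepsilon}(t,y)}=\frac{\Phi(x)-\Phi(y)}{u_r^{\varepsilon}(t,x)}-\frac{\Phi(y)\bigl(u_r(t,x)-u_r(t,y)\bigr)}{u_r^{\varepsilon}(t,x)\,u_r^{\varepsilon}(t,y)}$$
together with $u_r^{\varepsilon}\ge\varepsilon$ and $\Phi\le\|\Phi\|_\infty$ to dominate the interior part by a multiple of $\E^{\delta,(r),B_1}(\Phi,\Phi)+\E^{\delta,(r),B_1}(u_r(t,\cdot),u_r(t,\cdot))$, which are both finite since $\Phi\in C_c^{\rm lip}(\R^d)$ and $u_r(t,\cdot)\in\F^{\delta,(r),B_1}$. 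For the cross term
$$2\int_{B_1}\left(\frac{\Phi(x)}{u_r^{\varepsilon}(t,x)}\right)^{\!2}\int_{B_1^c}r^{d+2}J^{(\delta)}(rx,ry)\,\d y\,\d x,$$
I would bound the inner integral by $C(1+\mathrm{dist}(x,\partial B_1)^{-\alpha_2})$ via the stable-type control on $J^{(\delta)}$ near the diagonal and the Assumption {\bf(B)} tail, and then exploit $\Phi(x)^2\lesssim(1-|x|)^{24/(2-\alpha_2)}$ to see that the outer integral converges, since $24/(2-\alpha_2)-\alpha_2>-1$ whenever $\alpha_2\in(0,2)$.

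For (2), the key input is that, since $({\cal E}^{\delta,(r),B_1},\F^{\delta,(r),B_1})$ is symmetric, the associated semigroup is analytic on $L^2(B_1)$; hence $u_r(t,\cdot)$ lies in the domain of the generator $L^{\delta,(r),B_1}$ for every $t>0$ and $\partial_t u_r=L^{\delta,(r),B_1}u_r$ strongly in $L^2(B_1)$. Writing
$$H_{\varepsilon}(t+h)-H_{\varepsilon}(t)=\int_{B_1}\Phi(y)\log\!\Bigl(1+\tfrac{u_r(t+h,y)-u_r(t,y)}{u_r^{\varepsilon}(t,y)}\Bigr)\,\d y,$$
using $u_r^{\varepsilon}\ge\varepsilon$ to dominate the logarithm by a constant times $|u_r(t+h,\cdot)-u_r(t,\cdot)|$, dominated convergence yields $H_{\varepsilon}'(t)=\int_{B_1}\Phi(y)\,\partial_t u_r(t,y)\,u_r^{\varepsilon}(t,y)^{-1}\,\d y$. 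Since $\Phi/u_r^{\varepsilon}\in\F^{\delta,(r),B_1}$ by (1), pairing $\partial_t u_r=L^{\delta,(r),B_1}u_r$ against this test function via the defining identity of the Dirichlet form delivers \eqref{e:deriv}.

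The main obstacle I foresee is the cross term in (1): the interior piece is handled by soft chain-rule manipulations, but the nonlocal exchange across $\partial B_1$ picks up a $\mathrm{dist}(\cdot,\partial B_1)^{-\alpha_2}$ singularity that is precisely counterbalanced by the high vanishing exponent $12/(2-\alpha_2)$ built into $\Phi$. Matching these two rates---and keeping all constants independent of $\delta\in(0,1)$ so that the subsequent $\delta\downarrow 0$ limit in the lower-bound argument is legitimate---is the delicate bookkeeping part of the proof.
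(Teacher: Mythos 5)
Your proof is correct, but for part (1) it takes a more computational route than the paper's. The paper simply records the two pointwise bounds
$$\frac{\Phi(x)}{u_r^{\varepsilon}(t,x)}\le \frac{1}{\varepsilon}\Phi(x),\qquad
\Bigl|\frac{\Phi(x)}{u_r^{\varepsilon}(t,x)}-\frac{\Phi(y)}{u_r^{\varepsilon}(t,y)}\Bigr|
\le \frac{1}{\varepsilon}|\Phi(x)-\Phi(y)|+\frac{C_\Phi}{\varepsilon^2}|u_r(t,x)-u_r(t,y)|,$$
and then invokes the strong normal contraction property for Dirichlet forms (\cite[proof of Theorem 1.4.2(ii)]{FOT11}): since $\Phi$ and $u_r(t,\cdot)$ both lie in $\F^{\delta,(r),B_1}$ and are bounded, and $\Phi(\cdot)/u_r^{\varepsilon}(t,\cdot)$ is a Lipschitz image of the pair vanishing at the origin, it is automatically in $\F^{\delta,(r),B_1}$. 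This bypasses the split of $\E^{\delta,(r),B_1}$ into an interior piece plus a killing piece, and in particular avoids your explicit estimate of $\int_{B_1^c}r^{d+2}J^{(\delta)}(rx,ry)\,\d y$ near $\partial B_1$. Your direct analysis is sound and, as a by-product, makes visible the reason the exponent $12/(2-\alpha_2)$ in $\Phi$ is chosen: $\Phi^2$ vanishes fast enough to absorb the $\mathrm{dist}(\cdot,\partial B_1)^{-\alpha_2}$ singularity of the killing rate. The paper's route is slicker but hides exactly this bookkeeping inside the fact that $\Phi\in\F^{\delta,(r),B_1}$. For part (2) the two arguments are essentially the same: the paper defers to \cite[Lemmas 4.1 and 4.7]{BBCK09} and \cite[Proposition 3.7]{Fo09}, which use the same semigroup-differentiability and dominated-convergence strategy you spell out; you supply the details the paper leaves to the references.
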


\begin{proof}
(i) \ For any $x,y\in B_1$,
$$\frac{\Phi(x)}{u_r^{\varepsilon}(t,x)}\leq \frac{1}{\varepsilon}\Phi(x)$$
and
\begin{equation*}
\begin{split}
\left|\frac{\Phi(x)}{u_r^{\varepsilon}(t,x)}-\frac{\Phi(y)}{u_r^{\varepsilon}(t,y)}\right|
&\leq \frac{1}{u_r^{\varepsilon}(t,x)}\left|\Phi(x)-\Phi(y)\right|
+\Phi(y)\left|\frac{1}{u_r^{\varepsilon}(t,x)}-\frac{1}{u_r^{\varepsilon}(t,y)}\right|\\
&=\frac{1}{u_r^{\varepsilon}(t,x)}|\Phi(x)-\Phi(y)|
+\frac{\Phi(y)}{u_r^{\varepsilon}(t,x)u_r^{\varepsilon}(t,y)}|u_r^{\varepsilon}(t,x)-u_r^{\varepsilon}(t,y)|\\
&\leq \frac{1}{\varepsilon}|\Phi(x)-\Phi(y)|+\frac{C_{\Phi}}{\varepsilon^2}|u_r(t,x)-u_r(t,y)|.
\end{split}
\end{equation*}
Then our assertion follows by  the strong version of the normal contraction property
(e.g., see the proof of \cite[Theorem 1.4.2 (ii)]{FOT11}).

\noindent
(ii) \ By (i), the right hand side of \eqref{e:deriv} is finite for any $t>0$.
Then our assertion follows by the same way as in \cite[Lemmas 4.1 and 4.7]{BBCK09} and \cite[Proposition 3.7]{Fo09}.
\end{proof}

\begin{lem}\label{lem-g'}
Under Assumption {\bf(B)},
there exist positive constants $c_1$ and $c_2$
such that for any $\varepsilon\in (0,1)$, $\delta\in(0,1)$, $x_1\in B_1\setminus{\cal N}_{\delta}$,
$t>0$ and $r\ge1$,
\begin{equation}\label{eq-g'}
\begin{split}
H_{\varepsilon}'(t)\geq -c_1+c_2\int_{B_1}\left(\log u_r^{\varepsilon}(t,y)-H_{\varepsilon}(t)\right)^2\,\Phi(y)\,{\rm d}y.
\end{split}
\end{equation}
\end{lem}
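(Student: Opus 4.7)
\medskip
\noindent\textbf{Proof plan.}
The plan is to follow the entropy-flow scheme of \cite[Lemma 4.8]{BBCK09} (cf.\ \cite[Proposition~3.8]{Fo09}), with special care that all constants are uniform in $\delta\in(0,1)$ and $r\ge 1$. First I would invoke Proposition \ref{p:domain}(ii) to write
$$H_\varepsilon'(t)=-{\cal E}^{\delta,(r),B_1}\!\left(u_r(t,\cdot),\frac{\Phi(\cdot)}{u_r^\varepsilon(t,\cdot)}\right).$$
Since $u_r(x)-u_r(y)=u_r^\varepsilon(x)-u_r^\varepsilon(y)$, writing $v(x)=\log u_r^\varepsilon(t,x)$ and $\sigma_{xy}=v(x)-v(y)$, an algebraic computation gives
$$(u_r^\varepsilon(x)-u_r^\varepsilon(y))\!\left(\tfrac{\Phi(x)}{u_r^\varepsilon(x)}-\tfrac{\Phi(y)}{u_r^\varepsilon(y)}\right)=-(\Phi(x)+\Phi(y))(\cosh\sigma_{xy}-1)-(\Phi(y)-\Phi(x))\sinh\sigma_{xy},$$
so that, setting $\widetilde J(x,y):=r^{d+2}J^{(\delta)}(rx,ry)$,
$$H_\varepsilon'(t)=\iint_{B_1\times B_1}\Big[(\Phi(x)+\Phi(y))(\cosh\sigma_{xy}-1)+(\Phi(y)-\Phi(x))\sinh\sigma_{xy}\Big]\widetilde J(x,y)\,\d x\,\d y.$$

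Next I would bound the cross term by Cauchy--Schwarz: for any small $\kappa>0$,
$$|(\Phi(y)-\Phi(x))\sinh\sigma_{xy}|\le \kappa(\Phi(x)+\Phi(y))(\cosh\sigma_{xy}-1)+C_\kappa\frac{(\Phi(x)-\Phi(y))^2}{\Phi(x)+\Phi(y)}(\cosh\sigma_{xy}+1).$$
The choice $\Phi(x)=C_\Phi(1-|x|^2)^{12/(2-\alpha_2)}$ is calibrated so that $|\Phi(x)-\Phi(y)|^2/(\Phi(x)+\Phi(y))\le c|x-y|^2$ uniformly on $B_1$, which combined with Assumption \textbf{(B)} and \eqref{big-jump} gives the uniform bound
$$\iint_{B_1\times B_1}|x-y|^2\widetilde J(x,y)(\cosh\sigma_{xy}+1)\,\d x\,\d y\le c<\infty$$
for the part with $\cosh\sigma_{xy}+1$ split; the unbounded $\cosh$ piece is reabsorbed into the main term by choosing $\kappa$ small and using $\cosh\sigma+1\le 2\cosh\sigma=2(\cosh\sigma-1)+2$. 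After this absorption together with $\cosh\sigma-1\ge \sigma^2/2$, one obtains
$$H_\varepsilon'(t)\ge c_2'\iint_{B_1\times B_1}(v(x)-v(y))^2(\Phi(x)+\Phi(y))\widetilde J(x,y)\,\d x\,\d y-c_1,$$
where $c_1$ collects all bounded error contributions and is uniform in $\delta,r,\varepsilon$.

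Finally, I would apply a weighted fractional Poincar\'e inequality on $B_1$: there is $c_P>0$ depending only on $d,\alpha_2$ and $\Phi$ such that
$$\int_{B_1}(v(y)-H_\varepsilon(t))^2\Phi(y)\,\d y\le c_P\iint_{B_1\times B_1}\frac{(v(x)-v(y))^2}{|x-y|^{d+\alpha_2}}(\Phi(x)+\Phi(y))\,\d x\,\d y.$$
To feed this into the previous step one must show $\widetilde J(x,y)\gtrsim |x-y|^{-d-\alpha_2}$ on the near-diagonal region controlling the right-hand side. This is where the modified kernel $J^{(\delta)}$ pays off: by \eqref{eq-j-delta} one has $J^{(\delta)}(z,w)\ge \kappa_2|z-w|^{-d-\alpha_2}\mathbf{1}_{|z-w|<\delta}$, so $\widetilde J(x,y)\ge\kappa_2 r^{2-\alpha_2}|x-y|^{-d-\alpha_2}\mathbf{1}_{|x-y|<\delta/r}$; since $\alpha_2<2$ and $r\ge 1$, the prefactor is $\ge 1$, giving the required uniform lower bound on the Poincar\'e region. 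Combining with the previous display yields \eqref{eq-g'}.

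The main obstacle is ensuring that every constant along the way is independent of $\delta\in(0,1)$ and $r\ge 1$: this forces the careful splitting of the integrand so that unbounded contributions only occur through the $(v(x)-v(y))^2$ piece (which is eventually converted to variance), while all non-gradient factors $|x-y|^2\widetilde J$ and $\widetilde J$ itself are bounded by universal constants via the finite second moment inequality \eqref{big-jump}. The other delicate point is the Poincar\'e step: one must verify it uses only the small-jump portion of $\widetilde J$ where \eqref{eq-j-delta} provides a $\delta$-independent lower bound, rather than the jumping kernel outside $B_1$ (which would spoil the comparison).
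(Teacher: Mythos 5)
Your overall template—deriving the entropy-flow identity from Proposition \ref{p:domain}(ii), isolating a positive ``gradient'' term plus a bounded error, then applying a weighted fractional Poincar\'e inequality—is the same skeleton as the paper's, but your algebraic decomposition genuinely diverges from the paper's, and in a way that introduces a gap you have not closed. The paper does not work with $\sigma_{xy}=\log u^\varepsilon(x)-\log u^\varepsilon(y)$ directly. It instead uses the exact identity $1-a+b-\tfrac{b}{a}=(1-\sqrt b)^2-\sqrt b\bigl(\tfrac{a}{\sqrt b}+\tfrac{\sqrt b}{a}-2\bigr)$ together with $s+s^{-1}-2\geq(\log s)^2$, which is tantamount to passing to the \emph{shifted} variable $w=\log\bigl(u_r^\varepsilon/\sqrt\Phi\bigr)$. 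In that variable the cross term vanishes identically; the only error is $(\sqrt{\Phi(x)}-\sqrt{\Phi(y)})^2$, which is bounded via Lipschitz continuity of $\sqrt\Phi$ (this is why $\Phi\sim(1-|x|^2)^{12/(2-\alpha_2)}$ is chosen: the exponent halved is still $>1$).

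Your approach keeps the cross term $(\Phi(y)-\Phi(x))\sinh\sigma_{xy}$ and tries to absorb it via Cauchy--Schwarz, and this is where the argument breaks down. After Young's inequality you need
\begin{equation*}
\frac{\kappa}{2}+\frac{C_\kappa}{1}\cdot\frac{(\Phi(x)-\Phi(y))^2}{(\Phi(x)+\Phi(y))^2}\;<\;1
\end{equation*}
uniformly on $B_1\times B_1$ in order to re-absorb the $(\cosh\sigma-1)$ portion of the error into the main term $(\Phi(x)+\Phi(y))(\cosh\sigma_{xy}-1)$. But the ratio $\frac{(\Phi(x)-\Phi(y))^2}{(\Phi(x)+\Phi(y))^2}$ tends to $1$ when $y$ approaches $\partial B_1$ with $x$ fixed in the interior, and the optimal Young constant obeys $\frac{\kappa}{2}+\frac{1}{2\kappa}\geq 1$, so no choice of $\kappa$ makes this work. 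If you instead refuse to absorb and try to bound $\iint\frac{(\Phi(x)-\Phi(y))^2}{\Phi(x)+\Phi(y)}(\cosh\sigma_{xy}+1)\widetilde J$ outright as a constant, you lose uniformity in $\varepsilon$, because $|\sigma_{xy}|$ can be of order $\log(1/\varepsilon)$ and then $\cosh\sigma_{xy}$ blows up. This is exactly the obstruction the paper's $\sqrt\Phi$-shift is designed to kill.

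Two further points. First, your display for $H_\varepsilon'(t)$ omits the killing term
$-2\int_{B_1}\Phi(x)\bigl(\int_{B_1^c}\widetilde J(x,y)\,\d y\bigr)\frac{u_r(t,x)}{u_r^\varepsilon(t,x)}\,\d x$ coming from the Dirichlet form on the subdomain $B_1$; this term is nonpositive, so dropping it gives an upper bound for $H_\varepsilon'$, not the lower bound you need, and it must be retained and estimated (as the paper does, using $0\leq u_r/u_r^\varepsilon\leq 1$ and \eqref{big-jump}). Second, your lower bound $\widetilde J(x,y)\gtrsim r^{2-\alpha_2}|x-y|^{-d-\alpha_2}\mathbf 1_{\{|x-y|<\delta/r\}}$ only uses the modified piece of $J^{(\delta)}$ and therefore truncates on a $\delta$-dependent ball; the resulting Poincar\'e constant would then depend on $\delta$. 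The paper instead uses $\widetilde J(x,y)\geq \kappa_1 r^{2-\alpha_1}|x-y|^{-d-\alpha_1}\mathbf 1_{\{|x-y|<1/r\}}$, valid on the full $\delta$-independent range $|rx-ry|<1$ because for $\delta\leq|rx-ry|<1$ one has $J^{(\delta)}=J$ and the lower bound in \eqref{A:jump-kernel}, while for $|rx-ry|<\delta$ the $\kappa_2|\cdot|^{-d-\alpha_2}$ modification dominates $\kappa_1|\cdot|^{-d-\alpha_1}$ on $\{|\cdot|<1\}$.
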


\begin{proof}
We mainly follow the argument of \cite[Lemma 4.7]{BBCK09}.
By Proposition \ref{p:domain} (ii),
\begin{equation}\label{e:deriv-lower}
\begin{split}
&H_{\varepsilon}'(t)\\
&=-{\cal E}^{\delta,(r),B_1}\left(u_r(t,\cdot), \frac{\Phi(\cdot)}{u_r^{\varepsilon}(t,\cdot)}\right)\\
&=-\iint_{B_1\times B_1}
(u_r^{\varepsilon}(t,y)-u_r^{\varepsilon}(t,x))
\frac{u_r^{\varepsilon}(t,x)\Phi(y)-u_r^{\varepsilon}(t,y)\Phi(x)}{u_r^{\varepsilon}(t,x)u_r^{\varepsilon}(t,y)}
r^{d+2} J^{(\delta)}(rx,ry)\,{\rm d}x\,{\rm d}y\\
&\quad -2\int_{B_1}\Phi(x)\left(r^{d+2}\int_{B_1^c}J^{(\delta)}(rx,ry)\,{\rm d}y\right)
\frac{u_r(t,x)}{u_r^{\varepsilon}(t,x)}\,{\rm d}x.
\end{split}
\end{equation}
Let $a=u_r^{\varepsilon}(t,y)/u_r^{\varepsilon}(t,x)$ and $b=\Phi(y)/\Phi(x)$.
Since $s+1/s-2\geq (\log s)^2$ for any $s>0$,
we have
\begin{align*}
&(u_r^{\varepsilon}(t,y)-u_r^{\varepsilon}(t,x))
\frac{u_r^{\varepsilon}(t,x)\Phi(y)-u_r^{\varepsilon}(t,y)\Phi(x)}{u_r^{\varepsilon}(t,x)u_r^{\varepsilon}(t,y)}\\
&=\Phi(x)\left(1-a+b-\frac{b}{a}\right)\\
&=\Phi(x)\left[(1-\sqrt{b})^2-\sqrt{b}\left(\frac{a}{\sqrt{b}}+\frac{\sqrt{b}}{a}-2\right)\right]\\
&\leq \Phi(x)\left[(1-\sqrt{b})^2-\sqrt{b}\left(\log\frac{a}{\sqrt{b}}\right)^2\right]\\
&=\left(\sqrt{\Phi(x)}-\sqrt{\Phi(y)}\right)^2-\sqrt{\Phi(x)\Phi(y)}
\left[\log\left(\frac{u_r^{\varepsilon}(t,y)}{\sqrt{\Phi(y)}}\right)
-\log\left(\frac{u_r^{\varepsilon}(t,x)}{\sqrt{\Phi(x)}}\right)\right]^2.
\end{align*}
Using this inequality with $0\leq u_r(t,x)/u_r^{\varepsilon}(t,x)\leq 1$, we obtain by \eqref{e:deriv-lower},
\begin{equation*}
\begin{split}
H_{\varepsilon}'(t)
&\geq -\iint_{B_1\times B_1}\left(\sqrt{\Phi(x)}-\sqrt{\Phi(y)}\right)^2r^{d+2}J^{(\delta)}(rx,ry)\,{\rm d}x\,{\rm d}y\\
&\quad+\iint_{B_1\times B_1}
\sqrt{\Phi(x)\Phi(y)}
\left[\log\left(\frac{u_r^{\varepsilon}(t,y)}{\sqrt{\Phi(y)}}\right)
-\log\left(\frac{u_r^{\varepsilon}(t,x)}{\sqrt{\Phi(x)}}\right)\right]^2
r^{d+2}J^{(\delta)}(rx,ry)\,{\rm d}x\,{\rm d}y\\
&\quad-2\int_{B_1}\Phi(x)\left(r^{d+2}\int_{B_1^c}J^{(\delta)}(rx,ry)\,{\rm d}y\right)\,{\rm d}x\\
&=-\iint_{{\mathbb R}^d\times {\mathbb R}^d}\left(\sqrt{\Phi(x)}-\sqrt{\Phi(y)}\right)^2r^{d+2}J^{(\delta)}(rx,ry)\,{\rm d}x\,{\rm d}y\\
&\quad+\iint_{B_1\times B_1}
\sqrt{\Phi(x)\Phi(y)}
\left[\log\left(\frac{u_r^{\varepsilon}(t,y)}{\sqrt{\Phi(y)}}\right)-\log\left(\frac{u_r^{\varepsilon}(t,x)}{\sqrt{\Phi(x)}}\right)\right]^2
r^{d+2}J^{(\delta)}(rx,ry)\,{\rm d}x\,{\rm d}y\\
&=:-({\rm I})+({\rm II}).
\end{split}
\end{equation*}

To give a lower bound of the last expression above,
we first show that
there exists a constant $C_1>0$,
which is independent of $\delta\in (0,1)$ and $\varepsilon\in (0,1)$,
such that
\begin{equation}\label{eq-1}
({\rm I})
\leq C_1\left(\int_{{\mathbb R}^d}|\nabla\sqrt{\Phi(x)}|^2\,{\rm d}x+\int_{B_1}\Phi(x)\,{\rm d}x\right).
\end{equation}
To do so, we write
\begin{align*}
({\rm I})
&=\iint_{\{0<|x-y|<1/r\}}\left(\sqrt{\Phi(x)}-\sqrt{\Phi(y)}\right)^2r^{d+2}J^{(\delta)}(rx,ry)\,{\rm d}x\,{\rm d}y\\
&\quad+\iint_{\{1/r\leq |x-y|<1\}}\left(\sqrt{\Phi(x)}-\sqrt{\Phi(y)}\right)^2r^{d+2}J^{(\delta)}(rx,ry)\,{\rm d}x\,{\rm d}y\\
&\quad +\iint_{\{|x-y|\geq 1\}}\left(\sqrt{\Phi(x)}-\sqrt{\Phi(y)}\right)^2r^{d+2}J^{(\delta)}(rx,ry)\,{\rm d}x\,{\rm d}y\\
&=:({\rm I})_1+({\rm I})_2+({\rm I})_3.
\end{align*}
By Assumption \ref{assum:jump} (ii) and \cite[(3.9)]{CKK08},
there exists a positive constant $c_1$,
which are independent of $\delta\in (0,1)$ and $r\geq 1$, such that
\begin{equation*}
\begin{split}
({\rm I})_1
&\leq \kappa_1 r^{d+2}\iint_{\{0<|x-y|<1/r\}}
\frac{\left(\sqrt{\Phi(x)}-\sqrt{\Phi(y)}\right)^2}{|rx-ry|^{d+\alpha_2}}\,{\rm d}x\,{\rm d}y
\leq c_1\int_{{\mathbb R}^d}|\nabla \sqrt{\Phi}(x)|^2\,{\rm d}x.
\end{split}
\end{equation*}
Since $6/(2-\alpha_2)>1$,
the function $\sqrt{\Phi(x)}=\sqrt{C_\Phi}(1-|x|^2)^{\frac{6}{2-\alpha_2}}{\bf1}_{B_1}(x)$ is Lipschitz continuous;
that is, there exists a positive constant $c_{\Phi}$ such that
$$
|\sqrt{\Phi(x)}-\sqrt{\Phi(y)}|\leq c_{\Phi}|x-y|
\quad \text{for any $x,y\in {\mathbb R}^d$}.
$$
We note that for any $\delta\in (0,1)$,  $J^{(\delta)}(rx,ry)=J(rx,ry)$ for $x,y\in \R^d$ and $r>1$ with $|rx-ry|\geq 1$.
Therefore, there exist positive constants  $c_{2i}$ ($i=1,2,3$),
which are independent of $r\geq 1$ and $\delta\in (0,1)$, such that
\begin{equation*}
\begin{split}
({\rm I})_2
&\leq c_{21} r^{d+2}\iint_{\{1/r\leq |x-y|<1\}}
{\left(\sqrt{\Phi(x)}-\sqrt{\Phi(y)}\right)^2}J(rx,ry)\,{\rm d}x\,{\rm d}y \\
&\leq c_{22}   r^{d+2}\int_{B_2} \left(\int_{\{1/r\leq |x-y|<1\}}{|x-y|^2}J(rx,ry)\,{\rm d}y\right)\,{\rm d}x\\
&\le \frac{c_{22}}{r^d} \int_{B_{2r}} \left(\int_{\{|x-y|\ge 1\}}{|x-y|^2}J(x,y)\,{\rm d}y\right)\,{\rm d}x\\
&\leq {c_{23}}=c_{23}\int_{B_1} \Phi(x)\,{\rm d}x,
\end{split}
\end{equation*}
where we used Assumption {\bf(B)} in the last inequality.
We also have
\begin{equation*}
\begin{split}
({\rm I})_3
&\leq  c_{31} r^{d+2}\int_{B_1}\left(\int_{\{|x-y|\geq 1\}}
 J(rx,ry)\,{\rm d}y \right)\,\d x\\
&=\frac{c_{31}r^2}{r^{d}} \int_{B_{r}}  \left(\int_{\{|x-y|\geq r\}}J(x,y)\,\d y\right)\,{\rm d}x\\
&\le c_{32}= c_{32} \int_{B_1}\Phi(x)\,{\rm d}x
\end{split}
\end{equation*}
for some positive constants $c_{3i}$ $(i=1,2)$,
which are independent of $r\geq 1$ and $\delta\in (0,1)$.
We thus arrive at (\ref{eq-1}).

We next show that there exist positive constants $c$ and $c'$,
which are independent of $\varepsilon\in(0,1)$, $\delta\in (0,1)$,
$x_1\in B_1\setminus{\cal N}_{\delta}$,
$t>0$ and $r\geq 1$,
such that
\begin{equation}\label{eq-2}
({\rm II})\geq -c+c'\int_{B_1}(\log u_r^{\varepsilon}(t,x)-H_{\varepsilon}(t))^2\Phi(x)\,{\rm d}x.
\end{equation}
To do so, we first prove that
\begin{equation}\label{e:l^2}
\int_{B_1}\left[\log\left(\frac{u_r^{\varepsilon}(t,x)}{\sqrt{\Phi(x)}}\right)\right]^2\,\d x<\infty.
\end{equation}
Since \eqref{e:upper1} implies that
$$u_r(t,x)=q_r^{\delta,B_1}(t,x,x_1)=r^dq^{\delta,B_r}(r^2t,rx,rx_1)
\leq c''r^d[(r^2t)^{-d/2}\vee (r^2t)^{-d/\alpha_1}],$$
we have
$$\varepsilon\leq u_r^{\varepsilon}(t,x)=u_r(t,x)+\varepsilon\leq c''r^d[(r^2t)^{-d/2}\vee (r^2t)^{-d/\alpha_1}]+\varepsilon$$
so that
$$0\leq (\log u_r^{\varepsilon}(t,x))^2
\leq[|\log \varepsilon|\vee |\log (c'' r^d((r^2t)^{-d/2}\vee (r^2t)^{-d/\alpha_1})+\varepsilon)|]^2.$$
Hence
$$\int_{B_1}(\log u_r^{\varepsilon}(t,x))^2\,{\rm d}x<\infty.$$
Noting that
$$\left[\log\left(\frac{u_r^{\varepsilon}(t,x)}{\sqrt{\Phi(x)}}\right)\right]^2
=\left(\log u_r^{\varepsilon}(t,x)-\log\sqrt{\Phi(x)}\right)^2
\leq 2(\log u_r^{\varepsilon}(t,x))^2+2(\log \sqrt{\Phi(x)})^2$$
and
$$\int_{B_1}(\log \sqrt{\Phi(x)})^2\,{\rm d}x<\infty,$$
we get \eqref{e:l^2}.

We next give a lower bound of $({\rm II})$. By \eqref{A:jump-kernel} and \eqref{eq-j-delta},
we have for all $r\ge 1$ and $x,y\in \R^d$,
$$
r^{d+2}J^{(\delta)}(rx,ry)
\geq r^{d+2}\frac{\kappa_1}{|rx-ry|^{d+\alpha_1}}{\bf 1}_{\{|x-y|<1/r\}}
=r^{2-\alpha_1}\frac{\kappa_1}{|x-y|^{d+\alpha_1}}{\bf 1}_{\{|x-y|<1/r\}}.
$$
Then by \eqref{e:l^2} and the weighted Poincar\'e inequality (\cite[Corollary 6]{DK13},
see also the argument in \cite[Theorem 4.1]{CKK11} and \cite[Proposition 3.2]{CKK08}),
we obtain
\begin{equation}\label{eq-2-1}
\begin{split}
({\rm II})
&\geq
r^{2-\alpha_1}\iint_{B_1\times B_1}
\sqrt{\Phi(x)\Phi(y)}
\left(\log\left(\frac{u_r^{\varepsilon}(t,y)}{\sqrt{\Phi(y)}}\right)-\log\left(\frac{u_r^{\varepsilon}(t,x)}{\sqrt{\Phi(x)}}\right)\right)^2\\
&\qquad\qquad \qquad\qquad \times
\frac{\kappa_1}{|x-y|^{d+\alpha_1}}{\bf 1}_{\{|x-y|<1/r\}}\,{\rm d}x\,{\rm d}y\\
&\geq c_4\int_{B_1}\left[\log\left(\frac{u_r^{\varepsilon}(t,x)}{\sqrt{\Phi(x)}}\right)
-\left(\int_{B_1}\log\left(\frac{u_r^{\varepsilon}(t,y)}{\sqrt{\Phi(y)}}\right)\Phi(y)\,{\rm d}y\right)\right]^2\Phi(x)\,{\rm d}x\\
&=c_{4}\int_{B_1}\left[\log\left(\frac{u_r^{\varepsilon}(t,x)}{\sqrt{\Phi(x)}}\right)
-\left(H_{\varepsilon}(t)-\frac{1}{2}\int_{B_1}\Phi(y)\log \Phi(y)\,\d y\right)\right]^2\Phi(x)\,{\rm d}x
\end{split}
\end{equation}
for some positive constant $c_{4}=c_{4}(\kappa_1,d,\alpha_1, \Phi)$,
which is independent of $\delta\in (0,1)$,
$x_1\in B_1\setminus{\cal N}_{\delta}$,
$t>0$, $r\geq 1$ and $\varepsilon\in (0,1)$.
Moreover, since
\begin{equation*}
\begin{split}
(\log u_r^{\varepsilon}(t,x)-H_{\varepsilon}(t))^2
&\leq 2\left[\log\left(\frac{u_r^{\varepsilon}(t,x)}{\sqrt{\Phi(x)}}\right)
-\left(H_{\varepsilon}(t)-\frac{1}{2}\int_{B_1}\Phi(y)\log\Phi(y)\,\d y\right)\right]^2\\
&\quad
+2\left(\frac{1}{2}\log\Phi(x)-\frac{1}{2}\int_{B_1}\Phi(y)\log\Phi(y)\,{\rm d}y\right)^2,
\end{split}
\end{equation*}
the last expression in \eqref{eq-2-1} is greater than
$$
\frac{c_{4}}{2}\int_{B_1}\left(\log u_r^{\varepsilon}(t,x)-H_{\varepsilon}(t)\right)^2\Phi(x)\,{\rm d}x-c_{5}
$$
for
$$c_{5}=\frac{c_{4}}{4}
\int_{B_1} \left(\log\Phi(x)-\int_{B_1}\Phi(y)\log\Phi(y)\,{\rm d}y\right)^2
\Phi(x)\,{\rm d}x,$$
whence \eqref{eq-2} follows.

Combining \eqref{eq-1} with \eqref{eq-2}, we have \eqref{eq-g'}.
The proof is complete. \end{proof}

\begin{lem}\label{lem-dist}
Under Assumption {\bf (B)},
there exist constants $t_0\in(0,1)$ small enough and $c_*=c_*(t_0)\geq 1$ such that
the following assertions hold.
\begin{enumerate}
\item For all $\delta\in (0,1)$, $r\geq c_*$, $t\in[t_0/8,2t_0]$ and $x\in \R^d\backslash \N_\delta$,
$$\Pp^{x}\left(|Y_t^{\delta,(r)}-Y_0^{\delta,(r)}|>\frac{1}{4}\right)\leq \frac{1}{12}.$$
\item For all $\delta\in (0,1)$, $r\geq c_*$, $t\in[t_0/8, t_0]$ and $x_1\in B_{1/2}\backslash \N_\delta$,
$$\int_{B(x_1,1/4)}u_r(t,x)\,\d x\geq \frac{3}{4}.$$
\end{enumerate}
\end{lem}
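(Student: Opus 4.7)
}
My plan is to reduce both statements to displacement estimates for the scaled process and prove these via a Dynkin-type argument with a bounded cutoff function. For part (1), I would choose a Lipschitz cutoff $f:\R^d\to[0,1]$ satisfying $f(0)=0$, $f(z)\ge 1$ on $\{|z|\ge 1/4\}$, and $f(z)\le C_0(|z|^2\wedge 1)$, and set $g(y):=f(y-x)$. Markov's inequality then gives $\Pp^{x}(|Y_t^{\delta,(r)}-x|>1/4)\le \Ee^{x}g(Y_t^{\delta,(r)})$. Applying Dynkin's formula to the scaled process (localized by an exit time from a large ball to secure integrability, then passed to the limit using the Lévy system to absorb the large-jump contribution), one obtains
\[
\Ee^{x}g(Y_t^{\delta,(r)})\le t\,\bigl\|L^{(r)}g\bigr\|_{\infty}.
\]
The crucial input is the uniform second-moment estimate for the scaled jumping kernel: a change of variables $v=r(w-z)$ reduces
\[
\int_{\R^{d}}|w-z|^{2}r^{d+2}J^{(\delta)}(rz,rw)\,dw=\int_{\R^{d}}|v|^{2}J^{(\delta)}(rz,rz+v)\,dv,
\]
which is bounded by a constant $M$ independent of $z$, $r\ge 1$ and $\delta\in(0,1)$ since $\alpha_{2}<2$ handles the near-singular piece uniformly in $\delta$, while \eqref{sec} governs the large-jump piece. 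Choosing $t_{0}$ so small that $2t_{0}\|L^{(r)}g\|_{\infty}\le 1/12$ yields the claim.

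\paragraph{Part (2).} Here my plan is to rewrite the integral as a transition probability and control it via part (1) plus an exit-time estimate. By symmetry of the Dirichlet heat kernel,
\[
\int_{B(x_{1},1/4)}u_{r}(t,x)\,dx=\Pp^{x_{1}}\bigl(Y_{t}^{\delta,(r),B_{1}}\in B(x_{1},1/4)\bigr).
\]
Since $x_{1}\in B_{1/2}$, the ball $B(x_{1},1/2)$ sits inside $B_{1}$, so with $\tau:=\tau_{B(x_{1},1/2)}^{\delta,(r)}$ the event $\{Y_{t}^{\delta,(r)}\in B(x_{1},1/4),\;\tau>t\}$ forces $Y_{t}^{\delta,(r),B_{1}}\in B(x_{1},1/4)$. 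A union bound gives
\[
\int_{B(x_{1},1/4)}u_{r}(t,x)\,dx\ge 1-\Pp^{x_{1}}(|Y_{t}^{\delta,(r)}-x_{1}|>1/4)-\Pp^{x_{1}}(\tau\le t).
\]
Part (1) bounds the first term by $1/12$. For the second, take $h\in C_{c}^{\rm lip}(\R^{d})$ with $h(0)=0$, $h\ge 1$ on $\{|z|\ge 1/2\}$ and $h(z)\le C_{0}(|z|^{2}\wedge 1)$; optional stopping yields
\[
\Pp^{x_{1}}(\tau\le t)\le \Ee^{x_{1}}h(Y_{\tau\wedge t}^{\delta,(r)}-x_{1})\le t\,\|L^{(r)}[h(\cdot-x_{1})]\|_{\infty},
\]
which is again $\le 1/12$ after shrinking $t_{0}$ further. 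Thus the lower bound is at least $5/6\ge 3/4$.

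\paragraph{Main obstacle.} The delicate step throughout is the uniform-in-$(r,\delta)$ bound on $\|L^{(r)}g\|_{\infty}$. Splitting
\[
L^{(r)}g(z)=2\!\int\!\bigl[g(w)-g(z)-\nabla g(z)\cdot(w-z)\bigr]r^{d+2}J^{(\delta)}(rz,rw)\,dw+2\nabla g(z)\cdot\!\int\!(w-z)r^{d+2}J^{(\delta)}(rz,rw)\,dw,
\]
the first integral is controlled by $\|\nabla^{2}g\|_{\infty}M$ via the second moment bound. The second, drift-like integral is the genuine difficulty: its integrand is not absolutely integrable when $\alpha_{2}\ge 1$. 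This is resolved by observing that on $\{|w-z|<\delta/r\}$ the modified kernel $J^{(\delta)}(rz,rw)=\kappa_{2}|rw-rz|^{-d-\alpha_{2}}$ is radially symmetric and therefore contributes zero to the drift in the principal-value sense, while the remaining range $\{|w-z|\ge\delta/r\}$ is estimated by splitting at $|u|=1$ and using Assumption~\ref{assum:jump}(ii) together with \eqref{sec} to produce a bound that is uniform in $\delta\in(0,1)$ and $r\ge 1$.
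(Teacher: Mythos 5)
Your reduction in part (2) — rewriting the integral as a Dirichlet heat kernel probability and peeling off an exit-time term — matches the paper's structure, and the use of part (1) there is sound. The problem lies in the generator/Dynkin argument you use to prove both displacement estimates. You decompose $L^{(r)}g(z)$ into a second-order remainder plus the drift integral $\nabla g(z)\cdot\int(w-z)\,r^{d+2}J^{(\delta)}(rz,rw)\,\d w$, and claim this drift is uniformly controlled by radial symmetry on $\{|w-z|<\delta/r\}$ and absolute bounds elsewhere. But the symmetry $J(x,y)=J(y,x)$ from Assumption \ref{assum:jump}(i) is \emph{not} the statement $J(x,x+v)=J(x,x-v)$, so it gives no cancellation of the first moment around $rz$ outside the modified range. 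On $\{\delta\le|r(w-z)|<1\}$ the only input is Assumption \ref{assum:jump}(ii), and the substitution $v=r(w-z)$ gives
\[
r^{d+2}\int_{\{\delta/r\le|w-z|<1/r\}}|w-z|\,J(rz,rw)\,\d w
\ \le\ \kappa_2\,r\int_{\{\delta\le|v|<1\}}|v|^{1-d-\alpha_2}\,\d v,
\]
which diverges as $\delta\downarrow 0$ whenever $\alpha_2\ge 1$ and in any case carries a factor $r$ that \eqref{sec} does not absorb; a similar spurious factor $r$ appears on $\{1\le|r(w-z)|<r\}$. So $\|L^{(r)}g\|_\infty$ is not bounded uniformly in $(\delta,r)$, and indeed under Assumption \ref{assum:jump} alone the pointwise formula for the generator need not make sense when $\alpha_2\ge 1$. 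The same defect affects the optional-stopping bound for $\Pp^{x_1}(\tau\le t)$ in your part (2).

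The paper's proof avoids the generator entirely. Part (i) follows by integrating the off-diagonal bound $q^\delta(r^2t,rx,y)\le c\,r^2t/|y-rx|^{d+2}$ for $r^2t\le|y-rx|^2$, which holds by Proposition \ref{thm-upper-bound0} applied to $J^{(\delta)}$ with constants independent of $\delta$; that estimate was derived by the Davies method, which only uses the carr\'e du champ $\Gamma$ and therefore never touches a first-moment (drift) integral. The remaining near-diagonal region $\{|y-rx|\ge r/4,\ |y-rx|^2<r^2t\}$ is empty as soon as $t\le 1/16$, so the probability is $\le c_2 t$ and one simply shrinks $t_0$. Part (ii) then uses the strong Markov inequality \eqref{e:ff7} rather than a Dynkin martingale to convert the exit time to displacement probabilities, and applies part (i). If you want a self-contained generator-free route, the essential missing ingredient in your argument is exactly the off-diagonal heat kernel bound, so you would effectively be redoing Proposition \ref{thm-upper-bound0} via Davies' method.
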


\begin{proof}
(i) \ By \eqref{eq-heat-scale} and the change of variables,
we have for all $t>0$ and $x\in \R^d\backslash \N_\delta$,
\begin{equation*}
\begin{split}
\Pp^{x}\left(|Y_t^{\delta,(r)}-Y_0^{\delta,(r)}|>\frac{1}{4}\right)
&=\int_{\{|y-x|\geq 1/4\}}q_r^{\delta}(t,x,y)\,{\rm d}y\\
&=r^d\int_{\{|y-x|\geq 1/4\}}q^{\delta}(r^2t,rx,ry)\,{\rm d}y
=\int_{\{|y-rx|\geq r/4\}}q^{\delta}(r^2t,rx,y)\,{\rm d}y\\
&=\int_{\{|y-rx|\geq r/4,
|y-rx|^2\ge r^2 t \}}
q^{\delta}(r^2t,rx,y)\,{\rm d}y\\
&\quad +\int_{\{|y-rx|\geq r/4,
r^2 t> |y-rx|^2\}}
q^{\delta}(r^2t,rx,y)\,{\rm d}y\\
&=:({\rm I})+({\rm II}).
\end{split}
\end{equation*}
Since the jumping kernel $J^{(\delta)}(x,y)$ fulfills Assumption {\bf(B)},
we see by Proposition \ref{thm-upper-bound0} that
there are constants $c_i \,(i=1,2)>0$ and $t_1>0$ (both are independent of $\delta\in(0,1)$)
such that for all $r^2t\ge t_1$ and $x\in \R^d\backslash \N_\delta$,
\begin{equation*}
\begin{split}
({\rm I})
&\leq \int_{\{|y-rx|\geq r/4, |y-rx|^2\ge r^2 t\}}
\frac{c_1 r^2 t}{|y-rx|^{d+2}}\,{\rm d}y\leq c_1 r^2 t \int_{\{|y-rx|\geq r/4\}}\frac{{\rm d}y}{|y-rx|^{d+2}}
=c_2t.
\end{split}
\end{equation*}
On the other hand, if $t\leq 1/16$, then  $r^2 t\leq r^2/16$,
and so
$({\rm II})=0.$
Therefore, if we choose $t_2>0$ small enough such that
\begin{equation*}\label{eq-t_0}
t_2\leq \frac{1}{32} \quad \text{and} \quad
c_2t_2\leq \frac{1}{24},
\end{equation*}
then for any $r\geq \sqrt{{8t_1}/{t_2}}$ and $t\in [t_2/8, 2t_2]$,
$$\Pp^{x}\left(|Y_t^{\delta,(r)}-Y_0^{\delta,(r)}|>\frac{1}{4}\right)\leq \frac{1}{12}.$$
The desired assertion follows by taking $t_0=t_2$ and $c_*=1\vee \sqrt{8t_1/t_2}$.

\noindent
(ii) \ For an open subset $D$ of ${\R^d}$,
let $\tau_D^{Y^{\delta,(r)}}$ be the exit time of $Y^{\delta,(r)}$ from $D$.
Since $q_r^{\delta,B_1}(t,x,x_1)=q_r^{\delta,B_1}(t,x_1,x)$,
\begin{equation}\label{e:lower-u}
\begin{split}
\int_{B(x_1,1/4)}u_r(t,x)\,\d x
&=\int_{B(x_1,1/4)}q_r^{\delta,B_1}(t,x,x_1)\,\d x\\
&=\int_{B(x_1,1/4)}q_r^{\delta,B_1}(t,x_1,x)\,\d x\\
&=\Pp^{x_1}(|Y^{\delta,(r), B_1}_t-x_1|<1/4)\\
&=\Pp^{x_1}\left(|Y^{\delta,(r)}_t-x_1|<1/4, t<\tau_{B_1}^{Y^{\delta,(r)}}\right).
\end{split}
\end{equation}
Noting that
\begin{equation*}
\begin{split}
1&=\Pp^{x_1}\left(|Y^{\delta,(r)}_t-x_1|<1/4, t<\tau_{B_1}^{Y^{\delta,(r)}}\right)
+\Pp^{x_1}\left(|Y^{\delta,(r)}_t-x_1|<1/4, \tau_{B_1}^{Y^{\delta,(r)}}\leq t\right)\\
&\quad +\Pp^{x_1}\left(|Y^{\delta,(r)}_t-x_1|\geq 1/4\right)\\
&\leq \Pp^{x_1}\left(|Y^{\delta,(r)}_t-x_1|<1/4, t<\tau_{B_1}^{Y^{\delta,(r)}}\right)
+\Pp^{x_1}\left(\tau_{B_1}^{Y^{\delta,(r)}}\leq t\right)
+\Pp^{x_1}\left(|Y^{\delta,(r)}_t-x_1|\geq 1/4\right),
\end{split}
\end{equation*}
we get by \eqref{e:lower-u},
\begin{equation}\label{e:lower-u2}
\int_{B(x_1,1/4)}u_r(t,x)\,\d x
\geq 1-\Pp^{x_1}\left(\tau_{B_1}^{Y^{\delta,(r)}}\leq t\right)
-\Pp^{x_1}\left(|Y^{\delta,(r)}_t-x_1|\geq 1/4\right).
\end{equation}

Let $X=(\{X_t\}_{t\geq 0}, \{{\Pp}^x\}_{x\in {\R^d}})$
be the strong Markov process on $\R^d$ and $\tau_D$ the exit time of $X$ from $D$.
Then by the same way as in \cite[(2.18)]{BGK09},
the strong Markov property implies that for any $x\in \R^d$, $t>0$ and $r>0$,
\begin{equation}\label{e:ff7}
\begin{split}
\Pp^x (\tau_{B(x,r)}\le t)\le & \Pp^x(\tau_{B(x,r)}\le t, |X_{2t}-x|\le r/2)+\Pp^x(|X_{2t}-x|\ge r/2)\\
\le&\Pp^x(\tau_{B(x,r)}\le t, |X_{2t}-X_{\tau_{B(x,r)}}|\ge r/2)+ \Pp^x(|X_{2t}-x|\ge r/2)\\
\le&\sup_{s\le t, |z-x|\ge r}\Pp^z(|X_{2t-s}-z|\ge r/2)+ \Pp^x(|X_{2t}-x|\ge r/2)\\
\le& 2 \sup_{s\in [t,2t], z\in \R^d}\Pp^z(|X_{s}-z|\ge r/2).
\end{split}
\end{equation}
Applying it to $\{Y^{\delta,(r)}_t\}_{t\geq 0}$, we see that for any $x_1\in B_{1/2}\setminus{\cal N}_{\delta}$,
$$\Pp^{x_1}(\tau_{B_1}^{Y^{\delta,(r)}}\leq t)
\leq \Pp^{x_1}(\tau_{B(x_1,1/2)}^{Y^{\delta,(r)}}\leq t)
\leq 2 \sup_{s\in [t,2t], z\in \R^d}\Pp^z(|Y^{\delta,(r)}_{s}-z|\ge 1/4).$$
Then by (i), we obtain for any $x_1\in B_{1/2}\setminus{\cal N}_{\delta}$ and $t\in [t_0/8,t_0]$,
\begin{equation*}
\begin{split}
&\Pp^{x_1}\left(\tau_{B_1}^{Y^{\delta,(r)}}\leq t\right)
+\Pp^{x_1}\left(|Y^{\delta,(r)}_t-x_1|\geq 1/4\right)\\
&\leq 2 \sup_{s\in [t,2t], z\in \R^d}\Pp^z(|Y^{\delta,(r)}_{s}-z|\ge 1/4)
+\Pp^{x_1}\left(|Y^{\delta,(r)}_t-x_1|\geq 1/4\right)\\
&\leq 2\cdot\frac{1}{12}+\frac{1}{12}=\frac{1}{4}.
\end{split}
\end{equation*}
Hence the proof is complete by \eqref{e:lower-u2}.
\end{proof}

Now, we are in position to give the proof of Proposition \ref{thm-lower-1}.

\begin{proof}[Proof of Proposition $\ref{thm-lower-1}$]
Let $t_0\in(0,1)$ and $c_*\geq 1$ be the same constants as in Lemma \ref{lem-dist}.
We first prove that there exists a positive constant $c=c(t_0)$ such that
for all $\delta\in(0,1)$,  $r\geq c_*$, $x_1\in B_{1/2}\backslash \N_\delta$
and $t_1\in [t_0/4, t_0]$,
$$\int_{B_1}\Phi(y)\log q_r^{\delta,B_1}(t_1,y,x_1)\,\d y\geq -c.$$

Our approach here is similar to that of \cite[Lemmas 3.3.1--3.3.3]{Da89} and \cite[Proof of Theorem 2.5]{Fo09}.
Fix $\varepsilon\in (0,1)$, $\delta\in (0,1)$, $x_1\in B_{1/2} \backslash \N_\delta$,  $r\geq c_*$ and $t\in [t_0/8, t_0]$.
Let  $K$ be a constant such that $|B(x_1,1/4))|e^{-K}=1/4$, and define
$$D_t^{\varepsilon}:=\left\{x\in B\left(x_1,{1}/{4}\right) \mid u_r^{\varepsilon}(t,x)\geq e^{-K}\right\}.$$
Then
$$
\int_{B(x_1,1/4)\setminus D_t^{\varepsilon}}u_r(t,x)\,{\rm d}x
\leq
\int_{B(x_1,1/4)\setminus D_t^{\varepsilon}}u_r^{\varepsilon}(t,x)\,{\rm d}x
\leq e^{-K}|B(x_1,1/4)|
=\frac{1}{4}.
$$
Since  $r\geq 1$ and $t\leq 1$ by assumption,
we get from \eqref{e:upper1} that
\begin{equation}\label{e:upper-scale}
\begin{split}
u_r(t,x)=r^dq^{\delta, B_r}(r^2t,rx,rx_1)&\leq r^dq^{\delta}(r^2t,rx,rx_1)\\
&\leq c_{1}r^d\left((r^2 t)^{-d/2}\vee (r^2 t)^{-d/\alpha_1}\right)\leq c_{1}t^{-d/\alpha_1},
\end{split}
\end{equation}
where $c_{1}$ is a positive constant
independently of $\delta\in (0,1)$, $r\geq 1$ and $x, x_1\in B_{1/2}\backslash \N_\delta$.
Then
$$
\int_{D_t^{\varepsilon}}u_r(t,x)\,{\rm d}x
\leq \frac{c_{1}}{t^{d/\alpha_1}}|D_t^{\varepsilon}|.
$$
Combining all the estimates above with  Lemma \ref{lem-dist} (ii), we have
$$
\frac{3}{4}
\leq \int_{B(x_1,1/4)}u_r(t,x)\,{\rm d}x
=\int_{D_t^{\varepsilon}}u_r(t,x)\,{\rm d}x+\int_{B(x_1,1/4)\setminus D_t^{\varepsilon}}u_r(t,x)\,{\rm d}x
\leq \frac{c_{1}}{t^{d/\alpha_1}}|D_t^{\varepsilon}|+\frac{1}{4};
$$
that is,
$$|D_t^{\varepsilon}|\geq \frac{t^{d/\alpha}}{2c_{1}}
\geq \frac{1}{2c_{1}}\left(\frac{t_0}{8}\right)^{d/\alpha}
\quad \text{for all $t\in [t_0/8,t_0]$}.$$
Furthermore, by following the argument in \cite[p.851--852]{CKK08} and using Lemma \ref{lem-g'},
there exists a positive constant $c_{2}=c_{2}(t_0)$,
which is independent of $\varepsilon\in (0,1)$, $\delta\in(0,1)$,  $r\geq c_*$ and $x_1\in B_{1/2}\backslash \N_\delta$,
such that for any $t_1\in [t_0/4, t_0]$,
\begin{equation}\label{e:lower-h}
H_{\varepsilon}(t_1)=\int_{B_1}\Phi(y)\log u_r^{\varepsilon}(t_1,y)\,\d y\geq -c_{2}.
\end{equation}

Note that if  $0<\varepsilon<1\wedge (2c_1/t_0^{d/\alpha_1})$,
then by \eqref{e:upper-scale},
$$\frac{\varepsilon t_1^{d/\alpha_1}}{2c_1}
\leq \frac{t_1^{d/\alpha_1}}{2c_1}u_r^{\varepsilon}(t_1,y)
= \frac{t_1^{d/\alpha_1}}{2c_1}(u_r(t_1,y)+\varepsilon)
\leq \frac{1}{2}+\frac{t_0^{d/\alpha_1}\varepsilon}{2c_1}\leq 1.$$
Therefore, by the monotone convergence theorem,
\begin{equation*}
\begin{split}
\int_{B_1}\Phi(y)\log \left(\frac{t_1^{d/\alpha_1}}{2c_1}u_r^{\varepsilon}(t_1,y)\right)\,\d y
\rightarrow \int_{B_1}\Phi(y)\log \left(\frac{t_1^{d/\alpha_1}}{2c_1}u_r(t_1,y)\right)\,\d y
\quad (\varepsilon\downarrow0).
\end{split}
\end{equation*}
Then by letting $\varepsilon\downarrow0$ in \eqref{e:lower-h}, we get
$$\int_{B_1}\Phi(y)\log q_r^{\delta,B_1}(t_1,y,x_1)\,\d y=\int_{B_1}\Phi(y)\log u_r(t_1,y)\,\d y\geq -c_{2},$$
which is the desired inequality.

We next discuss the lower bound of $q^{\delta}(t,x,y)$.
By Jensen's inequality,
there exists a positive constant
$c_{3}=c_{3}(t_0,\Phi)$
such that for all $\delta\in (0,1)$, $r\geq c_*$, $t_1\in [t_0/4, t_0]$ and $x_0,x_1\in B_{1/2}\backslash \N_\delta$,
\begin{equation*}
\begin{split}
\log q_r^{\delta,B_1}(2t_1,x_0,x_1)
&=\log\left(\int_{B_1}q_r^{\delta,B_1}(t_1,x_0,y)q_r^{\delta, B_1}(t_1,y,x_1)\,{\rm d}y\right)\\
&\geq
\log\left(\int_{B_1}q_r^{\delta,B_1}(t_1,x_0,y)q_r^{\delta, B_1}(t_1,y,x_1)\Phi(y)\,{\rm d}y\right)
-\log \|\Phi\|_{\infty}\\
&\geq  \int_{B_1}\log \left(q_r^{\delta,B_1}(t_1,x_0,y)q_r^{\delta, B_1}(t_1,y,x_1)\right)\Phi(y)\,{\rm d}y
-\log \|\Phi\|_{\infty}\\
&=\int_{B_1} \Phi(y)\log q_r^{\delta,B_1}(t_1,x_0,y)\,{\rm d}y
+\int_{B_1}\Phi(y) q_r^{\delta, B_1}(t_1,y,x_1)\,{\rm d}y\\
& \quad -\log \|\Phi\|_{\infty}\\
&\geq -c_{3};
\end{split}
\end{equation*}
that is,
\begin{equation}\label{e:heat-lower}
q_r^{\delta,B_1}(t,x_0,x_1)\geq e^{-c_{3}} \quad
\text{for all $t\in [t_0/2,2t_0]$}.
\end{equation}

As we see from the proof of Lemma \ref{lem-dist},
the positive constant $t_0$ can be arbitrary small.
In what follows, without loss of generality we may and can assume that $0<t_0<1/4$.
Then for any $t\in [1/2,2]$, there exists a positive integer $k_t\geq 1$ such that
$t-k_t t_0/2\in [t_0/2,2t_0]$. In fact,
\begin{equation}\label{e:k_t}
0<\frac{1}{t_0}-4\leq \frac{t-2t_0}{t_0/2}\leq k_t\leq\frac{t-t_0/2}{t_0/2}\leq \frac{4}{t_0}-1
\end{equation}
and
$$\frac{t-t_0/2}{t_0/2}-\frac{t-2t_0}{t_0/2}=3.$$
By the semigroup property and \eqref{e:heat-lower},
we have for any $t\in [1/2,2]$ and $x_0,x_1\in B_{1/2}\backslash \N_\delta$,
\begin{equation*}
\begin{split}
r^d q^{\delta,B_r}(r^2t,rx_0,rx_1)
&=q_r^{\delta,B_1}(t,x_0,x_1)\\
&=\int_{B_1}q_r^{\delta,B_1}(t-t_0/2,x_0,z_1)q_r^{\delta,B_1}(t_0/2,z_1,x_1)\,\d z_1\\
&\geq \int_{B_{1/2}}q_r^{\delta,B_1}(t-t_0/2,x_0,z_1)q_r^{\delta,B_1}(t_0/2,z_1,x_1)\,\d z_1\\
&\geq e^{-c_3} \int_{B_{1/2}}q_r^{\delta,B_1}(t-t_0/2,x_0,z_1)\, \d z_1.
\end{split}
\end{equation*}
By the same way, the last term above is equal to
\begin{equation*}
\begin{split}
&e^{-c_3} \int_{B_{1/2}}
\left(\int_{B_1}q_r^{\delta,B_1}(t-2\cdot t_0/2,x_0,z_2)q_r^{\delta,B_1}(t_0/2,z_2,z_1)\,\d z_2\right)\d z_1\\
&\geq e^{-2c_3} \int_{B_{1/2}}\left(\int_{B_{1/2}}q_r^{\delta,B_1}(t-2\cdot t_0/2,x_0,z_2)\,\d z_2\right)\d z_1.
\end{split}
\end{equation*}
By repeating this procedure and using \eqref{e:k_t},
there exists a positive constant $c_{4}=c_{4}(t_0,\Phi)$ such that
for all $\delta\in(0,1)$, $r\geq c_*$, $t\in [1/2,2]$  and $x_0,x_1\in B_{1/2}\backslash \N_\delta$,
\begin{equation}\label{e:iteration}
\begin{split}
r^d q^{\delta,B_r}(r^2t,rx_0,rx_1)
&\geq e^{-k_t c_3} \int_{B_{1/2}}\cdots \int_{B_{1/2}}q_r^{\delta,B_1}(t-k_t t_0/2,x_0,z_{k_t})\,\d z_{k_t}\cdots\,d z_1\\
&\geq e^{-(k_t+1) c_3}|B_{1/2}|^{k_t}
\geq c_4,
\end{split}
\end{equation} where $c_4$ is independent of $t$.

By taking $t=1$ in \eqref{e:iteration},
we find that for all $\delta\in(0,1)$, $r\geq c_*$ and $x_0,x_1\in B_{1/2}\backslash \N_\delta$,
$$q^{\delta,B_r}(r^2,rx_0,rx_1)\geq \frac{c_{4}}{r^d}.$$
Letting $r=\sqrt{t}$ in the estimate above,
we have for any  for $t\geq c_*^2$ and $x_0,x_1\in B_{1/2} \backslash \N_\delta$,
$$q^{\delta,B_{\sqrt{t}}}(t,\sqrt{t}x_0,\sqrt{t}x_1)\geq \frac{c_{4}}{t^{d/2}};$$
that is,
$$q^{\delta,B_{\sqrt{t}}}(t, x_0, x_1)\geq \frac{c_{4}}{t^{d/2}},\quad x_0,x_1\in B_{\sqrt{t}/2}\backslash \N_\delta.$$
By the space-uniformity of ${\mathbb R}^d$,
we can replace the center of any ball by $z_0\in{\mathbb R}^d$ in the argument above.
Hence for any $t\geq c_*^2$, $z_0\in \R^d$ and $x,y\in B(z_0,\sqrt{t}/2)\backslash \N_\delta$,
$$q^{\delta}(t,x,y)\geq q^{\delta,B(z_0,\sqrt{t})}(t, x, y)\geq \frac{c_{4}}{t^{d/2}}.$$
Note that for any $x,y\in \R^d$ with $|x-y|^2\leq t$,
there exists a point $z_0\in {\mathbb R}^d$ such that
$x,y\in B(z_0,\sqrt{t}/2)$.
Therefore, our assertion is valid  for $t\geq c_*^2$.
\end{proof}

At the end of this section, we present two-sided heat kernel estimates for jump processes,
upper bounds of which have been established in Corollary \ref{c:two}.
\begin{cor}\label{c:two1}
Assume that there is a constant $\varepsilon>0$ such that for all $x,y\in \R^d$ with $|x-y|\ge 1$,
$$J(x,y)\asymp \frac{1}{|x-y|^{d+2+\varepsilon}}.$$
Then, there exist positive constants $t_0\ge 1$, $\theta_0>0$ and $c_0$ such that for all $t\ge t_0$,
$$p(t,x,y)\asymp
\begin{cases}
\displaystyle \frac{1}{t^{d/2}},& t\geq |x-y|^2,\\
\displaystyle \frac{1}{t^{d/2}}\exp\left(-\frac{c_0|x-y|^2}{t}\right),
& \displaystyle  \frac{\theta_0|x-y|^2}{\log(1+|x-y|)}\leq t\leq |x-y|^2,\\
\displaystyle \frac{1}{|x-y|^{d+2+\varepsilon}}, & \displaystyle t\leq \frac{\theta_0|x-y|^2}{\log(1+|x-y|)}.
\end{cases}$$
Here we note that the constants $c_0$ and $\theta_0$ in the formula above
should be different for upper and lower bounds. \end{cor}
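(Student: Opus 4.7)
The upper bounds in all three regimes follow directly from Corollary \ref{c:two} with $\phi(r)=r^\varepsilon$ and $\Phi(r)=\varepsilon r^\varepsilon$, since the present hypothesis is exactly what Corollary \ref{c:two} requires. So the task reduces to producing matching lower bounds. I first check that Assumption {\bf (B)} is satisfied: the pointwise bound $J(x,y)\le c/|x-y|^{d+2+\varepsilon}\le c/|x-y|^{d+2}$ for $|x-y|\ge 1$ is immediate, and
$$\sup_{x\in \R^d}\int_{\{|x-y|\ge1\}}|x-y|^2 J(x,y)\,\d y\le c\int_{\{|x-y|\ge1\}}\frac{\d y}{|x-y|^{d+\varepsilon}}<\infty.$$
Hence Theorem \ref{T:lower} supplies $p(t,x,y)\ge c_1 t^{-d/2}$ when $|x-y|^2\le t$ and the Gaussian lower bound $p(t,x,y)\ge c_1 t^{-d/2}\exp(-c_2|x-y|^2/t)$ when $c_3|x-y|\le t\le |x-y|^2$. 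For $|x-y|$ large enough the latter range contains the Corollary's middle regime $\theta_0|x-y|^2/\log(1+|x-y|)\le t\le |x-y|^2$ because $|x-y|/\log(1+|x-y|)\to\infty$, so the first two regimes of the Corollary are settled.

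The remaining work is the jump-dominated regime, where I aim to show
$$p(t,x,y)\ge \frac{c\, t}{|x-y|^{d+2+\varepsilon}}\qquad\text{for } t\le \frac{\theta_0|x-y|^2}{\log(1+|x-y|)},$$
matching the factor of $t$ present in the upper bound of Corollary \ref{c:two}. The plan is a single-big-jump argument via the L\'evy system of $X$. Fix $x,y$ with $R:=|x-y|$ large enough and set $\rho=\sqrt{t}/8$; the hypothesis on $t$ forces $\rho\le R/8$, so that $B(x,\rho)$ and $B(y,\rho)$ are disjoint with $|z-w|\asymp R$ whenever $z\in B(x,\rho)$ and $w\in B(y,\rho)$. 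Conditioning on a first jump out of $B(x,\rho)$ that lands in $B(y,\rho/2)$ and using the L\'evy system formula,
$$\Pp^x\bigl(X_t\in B(y,\rho/2)\bigr)\ge \int_{t/4}^{t/2}\!\!\int_{B(x,\rho/2)}\!\!\int_{B(y,\rho/2)} p^{B(x,\rho)}(s,x,z)\, J(z,w)\, \Pp^w\!\bigl(\tau_{B(y,\rho)}>t-s\bigr)\,\d w\,\d z\,\d s,$$
where $p^{B(x,\rho)}$ denotes the Dirichlet heat kernel on $B(x,\rho)$.

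Each of the three factors is then estimated from below. For the survival factor, the exit-time inequality \eqref{e:ff7} combined with Proposition \ref{thm-upper-bound0} (applicable under Assumption (B)) gives $\Pp^w(\tau_{B(w,\rho)}\le s)\le C s/\rho^2$, which is at most a small constant for $s\le t$ and $\rho=\sqrt{t}/8$, so $\Pp^w(\tau_{B(y,\rho)}>t-s)\ge c$ uniformly in $w\in B(y,\rho/2)$. For the jumping kernel we use the assumed lower bound $J(z,w)\ge c R^{-(d+2+\varepsilon)}$. For the Dirichlet factor I plan to invoke the elementary inequality
$$p^{B(x,\rho)}(s,x,z)\ge p(s,x,z)-\Pp^x(\tau_{B(x,\rho)}\le s)\,\|p(s,\cdot,\cdot)\|_{\infty},$$
together with the near-diagonal lower bound from Theorem \ref{T:lower} (valid since $|x-z|\le\rho\le\sqrt{s}$) and the on-diagonal upper bound from Theorem \ref{l:upp}; provided $\rho=\sqrt{t}/8$ is taken small enough, the subtracted term is at most half of $p(s,x,z)$, yielding $p^{B(x,\rho)}(s,x,z)\ge c s^{-d/2}\asymp c\rho^{-d}$ for $z\in B(x,\rho/2)$. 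Multiplying the three factors and integrating in $s\in[t/4,t/2]$ produces $\Pp^x(X_t\in B(y,\rho/2))\ge c t \rho^d R^{-(d+2+\varepsilon)}$; dividing by $|B(y,\rho/2)|\asymp\rho^d$ and using continuity of $p(t,\cdot,y)$ on each $F_k$ from Theorem \ref{l:upp} gives $p(t,x,y)\ge c'\,t/R^{d+2+\varepsilon}$.

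The hardest step will be the Dirichlet heat-kernel lower bound $p^{B(x,\rho)}(s,x,z)\gtrsim s^{-d/2}$, since Theorem \ref{T:lower} is only stated for the unrestricted process. The subtraction argument above works in principle but requires delicate constant tracking: one must verify that the exit-time bound $Cs/\rho^2$, the supremum $\|p(s,\cdot,\cdot)\|_\infty\le c s^{-d/2}$ and the free lower bound $p(s,x,z)\ge c s^{-d/2}$ combine so as to leave at least half of the free heat kernel after subtraction. A cleaner alternative would be to run the Mosco-approximation plus Dirichlet-form chaining argument underlying Proposition \ref{thm-lower-1} directly inside $B(x,\rho)$ after rescaling, which avoids the subtraction altogether; I would use whichever version turns out to be technically smoother.
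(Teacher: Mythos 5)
Your overall strategy for the third (jump-dominated) regime—a single-big-jump lower bound via the L\'evy system—is the right one, and it is in the same spirit as the paper's proof. However, there is a genuine gap in the specific implementation. Your plan hinges on a Dirichlet heat-kernel lower bound $p^{B(x,\rho)}(s,x,z)\gtrsim s^{-d/2}$ for $z\in B(x,\rho/2)$, and the subtraction argument you propose to obtain it does not go through as written. The correct identity is
$$p^{B(x,\rho)}(s,x,z)=p(s,x,z)-\Ee^x\bigl[p(s-\tau_{B(x,\rho)},X_{\tau_{B(x,\rho)}},z);\ \tau_{B(x,\rho)}<s\bigr],$$
so what you must control is $\sup_{0<u\le s}\sup_{|w-z|\ge \rho/2}p(u,w,z)$, not $\|p(s,\cdot,\cdot)\|_\infty$. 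The paper's heat-kernel upper bounds (Proposition \ref{thm-upper-bound0}, Theorem \ref{thm-upper-bound}, Corollary \ref{c:two}) are established only for $t\ge t_0$, while Theorem \ref{l:upp} gives $p(u,\cdot,\cdot)\le c(u^{-d/\alpha_1}\vee u^{-d/2})$, which blows up as $u\downarrow0$ and has no off-diagonal decay. So the exit term is not controlled by the tools in the paper; your subtraction bound requires small-time off-diagonal estimates (or a Dirichlet-form chaining argument inside $B(x,\rho)$, as you yourself note), neither of which the paper provides. This is precisely the step the paper's proof is designed to avoid.

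The paper instead writes $p(2t,x,y)\ge\bigl(\inf_{|z-y|\le 2c_0\sqrt t}p(t,z,y)\bigr)\Pp^x(X_t\in B(y,2c_0\sqrt t))$ by Chapman--Kolmogorov, so that only the free near-diagonal lower bound from Theorem \ref{T:lower} is needed (no Dirichlet kernel at all). The hitting probability is then bounded below by a strong-Markov argument plus the L\'evy system applied to $X_{(t/2)\wedge\tau_{B(x,c_0\sqrt t)}}$, which uses only the exit-time estimate \eqref{e:ff6} (itself obtained from the already-proved upper bounds in Corollary \ref{c:two} and the inequality \eqref{e:ff7}). That route buys you the same conclusion $p(2t,x,y)\ge c\,t/|x-y|^{d+2+\varepsilon}$ without any new small-time or Dirichlet-kernel estimates. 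If you want to make your version rigorous you would either have to prove a small-time upper bound $p(u,w,z)\lesssim u/|w-z|^{d+2}$ for $|w-z|\ge\rho/2$, or carry out the Mosco/chaining argument of Proposition \ref{thm-lower-1} for the killed process inside $B(x,\rho)$—both substantial additions; switching to the paper's decomposition is cleaner.
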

\begin{proof}
The upper bound estimates have been proved in Corollary \ref{c:two},
so we need verify lower bounds. According to Theorem \ref{T:lower}, we have got the first two cases, i.e.\
$t\geq |x-y|^2$ and $\frac{\theta_0|x-y|^2}{\log(1+|x-y|)}\leq t\leq |x-y|^2$.
Then, the proof is complete, if we prove that there exist
 constants $t_0\ge 1$ and $c_1, c_2>0$ such that for all $t_0\le t\le c_1|x-y|^2$,
\begin{equation}\label{e:ff5}p(t,x,y)\ge \frac{c_2}{|x-y|^{d+2+\varepsilon}}.\end{equation}

(1) First, we claim that there are positive constants $c_0$ and $t_0$ such that
for all $t\ge t_0$ and $x\in \R^d \setminus\N$,
\begin{equation}\label{e:ff6}
\Pp^x(\tau_{B(x,c_0\sqrt{t})}\le t)\le 1/2.
\end{equation}
Indeed, we recall \eqref{e:ff7}: for any $x\in \R^d\setminus \N$ and $t,r>0$,
\begin{equation}\label{e:ff7-1}
\Pp^x (\tau_{B(x,r)}\le t)
\le 2 \sup_{s\le t, z\in \R^d}\Pp^z(|X_{2t-s}-z|\ge r/2). \end{equation}
Now, according to upper bound estimates for $p(t,x,y)$ in Corollary \ref{c:two},
there is a constant $t_0>0$ such that for all $t\ge t_0$, $r^2\ge t$ and $x\in \R^d\setminus \N$,
\begin{align*}\Pp^x(|X_t-x|\ge r)&\le c_1\left(\int_{\{|y-x|\ge r\}} t^{-d/2} \exp\left(-c_2|x-y|^2/t\right)\,\d y+\int_{\{|y-x|\ge r\}}
 \frac{t}{|x-y|^{d+2+\varepsilon}}\,\d y\right)\\
 &\le c_3\left(\int_{r^2/t}^\infty e^{-c_2s}s^{d/2-1}\,\d s+\int_r^\infty \frac{t}{s^{3+\varepsilon}}\,\d s\right)\\
 &\le c_4\left(e^{-c_5 r^2/t}+  \frac{t}{r^{2+\varepsilon}}\right). \end{align*} In particular, taking $r\ge c_6t^{1/2}$ for some $c_6$ large enough, we find that
 $$\Pp^x(|X_t-x|\ge r)\le 1/4.$$ This along with \eqref{e:ff7-1} yields \eqref{e:ff6}.

 (2) Next, we will use the approach of \cite[Section 4.4]{CZ16}. Fix $t\ge t_0$ and $x,y\in \R^d\setminus \N$ with $|x-y|\ge 4c_0 t^{1/2},$
 where $c_0$ is the constant in \eqref{e:ff6}. It follows from the
Chapman-Kolmogorov equation and Theorem \ref{T:lower} that
\begin{align*}
p(2t,x,y)=&\int_{\R^d} p(t,x,z)p(t,z,y)\,\d z\\
\ge &\left(\inf_{|z-y|\le 2c_0t^{1/2}} p(t,z,y)\right) \int_{\{|y-z|\le 2c_0t^{1/2}\}} p(t,x,z)\,\d z\\
\ge &c_1 t^{-d/2}\Pp^x(X_t\in B(y,2c_0t^{1/2})).
\end{align*}
For any $x\in \R^d$ and $r>0$, define
$$\sigma_{B(x,r)}=\inf\{t>0:X_t\in B(x,r)\}.$$
By the strong Markov property,
\begin{align*}&\Pp^x(X_t\in B(y,2c_0t^{1/2}))\\
&\ge \Pp^x\left(\sigma_{B(y,c_0t^{1/2})}\le t/2; \sup_{s\in\big[\sigma_{B(y,c_0t^{1/2})},t\big]}
|X_s-X_{\sigma_{B(y,c_0t^{1/2})}}|\le c_0t^{1/2}\right)\\
&\ge \Pp^x\left(\sigma_{B(y,c_0t^{1/2})}\le t/2\right)\inf_{z\in B(y,c_0t^{1/2})}\Pp^z(\tau_{B(z,c_0t^{1/2})}>t)\\
&\ge \frac{1}{2}\Pp^x\left(\sigma_{B(y,c_0t^{1/2})}\le t/2\right),
\end{align*} where we used \eqref{e:ff6} in the last inequality.
Furthermore, by the L\'evy system formula (see \cite[p.151]{BGK09} and \cite[Appendix A]{CK08}) and the fact that $|x-y|\ge 4c_0 t^{1/2},$
\begin{align*}\Pp^x\left(\sigma_{B(y,c_0t^{1/2})}\le t/2\right)
&\ge \Pp^x(X_{(t/2)\wedge \tau_{B(x,c_0t^{1/2})}}\in B(y, c_0t^{1/2}))\\
&\ge c_2\Ee^x\left(\int_0^{(t/2)\wedge \tau_{B(x,c_0t^{1/2})}} \int_{B(y, c_0t^{1/2})} \frac{\d z}{|X_s-z|^{d+2+\varepsilon}}\,\d s\right)\\
&\ge c_3 t^{d/2+1}\Pp^x(\tau_{B(x,c_0t^{1/2})}\ge t/2) \frac{1}{|x-y|^{d+2+\varepsilon}}\\
&\ge c_4t^{d/2+1}\frac{1}{|x-y|^{d+2+\varepsilon}},\end{align*}
where in the third inequality we used the facts that $|x-y|\ge 4c_0 t^{1/2}$, and
for all $s\in (0, (t/2)\wedge \tau_{B(x,c_0t^{1/2})})$ and $z\in B(y,  c_0t^{1/2})$,
$$|X_s-z|\le |X_s-x|+|x-y|+|y-z|\le 2 c_0t^{1/2}+|x-y|\le 2 |x-y|;$$ and the last inequality follows from \eqref{e:ff6}.
Combining all the inequalities above, we find that
$t\ge t_0$ and $x,y\in \R^d\setminus \N$ with $|x-y|\ge 4c_0 t^{1/2},$
$$p(2t,x,y)\ge \frac{c_4 t}{|x-y|^{d+2+\varepsilon}},$$ which proves \eqref{e:ff5}. \end{proof}

\section{Proof of Theorem \ref{main}}

\begin{proof}[Proof of Theorem $\ref{main}$]
Throughout this proof, we set $\psi(r)=\sqrt{r\log\log r}$.
Recall that $\tau_{B(x,r)}=\inf\{t>0:X_t\notin B(x,r)\}$
for any $x\in \R^d$ and $r>0$.

(1) In this case, $\phi(s)=\log^{1+\varepsilon}(e+s)$ and so
$c^{-1}\log^{\varepsilon}(e+s)\leq \Phi(s)\leq c\log^{\varepsilon}(e+s)$ for some constant
$c\ge1$.
We follow the proof of
\cite[Theorem 3.1(1)]{SW17} first.
Setting $t_k=2^k$, we have for all $k\ge 2$ and $x\in \R^d\setminus \N$,
\begin{equation}\begin{split}\label{e:ff1}\Pp^x(|X_s-x|&\ge C_0\psi(s)\hbox{ for some }s\in [t_{k-1},t_k])\\
&\le \Pp^x(\sup_{s\in [t_{k-1},t_k]}|X_s-x|\ge C_0\psi(t_{k-1}))\le \Pp^x(\tau_{B(x,C_0\psi(t_{k-1}))}\le t_k)\\
&\le 2 \sup_{s\le t_k, z\in \R^d}\Pp^z(|X_{t_{k+1}-s}-z|\ge C_0\psi(t_{k-1})/2),
\end{split}\end{equation}
where in the last inequality we used \eqref{e:ff7}.

For any $\kappa\ge 1$, let $\theta_0$ be the constant in Theorem \ref{thm-upper-bound}.
We choose $\theta_0^*>C$ large enough such that, if $r\ge \theta_0^*\psi(t)$, then
$t\le \frac{\theta_0 r^2}{\log \Phi(r)};$ if $r\le\theta_0^*\psi(t)$, then $ t\ge \frac{\theta'_0 r^2}{\log \Phi(r)}$ for some constant $\theta_0'\in (0,1)$.
Below, we fix this $\kappa$ and $\theta_0^*$, and let $\delta>0$ first. For any $x\in \R^d \setminus \N$ and $t, C>0$ large enough, according to Theorem \ref{thm-upper-bound}, Remark \ref{rem-upper-bound}(ii) and
Proposition \ref{thm-upper-bound3} (with $\delta=1/2$),
\begin{align*} &\Pp^x(|X_t-x|\ge C \psi(t))\\
&=\int_{\{|y-x|\ge C\psi(t)\}} p(t,x,y)\,\d y\\
&\le \frac{c_1}{t^{d/2}}\int_{\{C\psi(t)\le |y-x|\le  \theta^*_0\psi(t)\}} \exp\left(-\frac{c_2|x-y|^2}{t}\right) \,\d y\\
&\quad+c_3\int_{\{\theta^*_0\psi(t)\le |y-x|\le c_4\sqrt{t\log^{1+\delta}t}\}} \left( t^{-d/2} \frac{1}{\log^{\kappa \varepsilon/8} |x-y|}+
\frac{t}{|x-y|^{d+2}\log^{1+\varepsilon}|x-y|}\right)\,\d y\\
&\quad+c_5 \int_{\{|y-x|\ge c_4\sqrt{t\log^{1+\delta}t}\}} \frac{ t}{|x-y|^{d+2}\log^{(d+2)/4}\log\log (1+|x-y|)} \,\d y\\
&=:I_1+I_2+I_3,
\end{align*} where the constants $c_i (i=1,\cdots, 5)$ may depend on $\kappa$ and $\delta$.
First, it holds that
\begin{align*}I_2\le& c_{21}\left[(t\log^{1+\delta}t)^{d/2} \left (t^{-d/2}\log^{-\kappa \varepsilon/8} t  \right)+
\int_{\theta^*_0\psi(t)}^\infty \frac{t}{r^3\log^{1+\varepsilon}r}\,\d r \right]\\
\le & c_{22}\left[\log^{-((\kappa \varepsilon/8)-((1+\delta)d/2))} t +
\frac{1}{\log^{1+\varepsilon} t} \right]. \end{align*} Taking $\kappa\ge1$ large enough such that
$ \kappa \varepsilon/8\ge (1+\delta)d/2+1+\varepsilon,$ we find that
$$I_2\le  \frac{c_{23}}{\log^{1+\varepsilon} t}.$$ Second, we fix $\kappa$ as above. We find that
\begin{align*}I_1\le&\frac{c_{11}}{t^{d/2}}\int_{\{|y-x|\ge C\psi(t)\}} \exp\left(-\frac{c_2|x-y|^2}{t}\right) \,\d y\\
\le& c_{12}\int_{C^2\log\log t}^\infty\exp(-c_2s) s^{d/2-1}\,\d s\le c_{13}(\log t)^{-C^2 c_2/2}, \end{align*} where $c_2$ depends on $\kappa$ above. Choosing $C>1$ large enough such that
$C^2 c_2/2\ge 1+\varepsilon$, we get that $$I_1\le  \frac{c_{14}}{\log^{1+\varepsilon} t}.$$
Third, it is easy to see that
$$I_3\le  \frac{c_{31}}{\log^{1+\delta} t}.$$ In particular, letting $\delta=\varepsilon$,
$$I_3\le  \frac{c_{32}}{\log^{1+\varepsilon} t}.$$
By all the estimates above, we obtain that there is a constant $C_1>0$ such that
for any $x\in \R^d \setminus \N$ and $t, C>0$ large enough,
\begin{equation}\label{e:ff2} \Pp^x(|X_t-x|\ge C \psi(t))\le \frac{C_1}{\log^{1+\varepsilon} t}.\end{equation}

According to \eqref{e:ff1} and \eqref{e:ff2}, we know that there is a constant $C_2>0$ such that for all $k\ge 2$, $C_0>0$ large enough and $x\in \R^d\setminus \N$,
$$\Pp^x(|X_s-x|\ge C_0\psi(s)\hbox{ for some }s\in [t_{k-1},t_k])\le \frac{C_2}{k^{1+\varepsilon}}.$$ This together with the Borel-Cantelli lemma proves the first desired assertion.

(2) For any $c>0$ and $k\ge 1$, set $t_k=2^k$ and
$$B_k=\{|X_{t_{k+1}}-X_{t_k}|\ge c\psi(t_{k-1})\}.$$
Denote by $({F}_t)_{t\ge0}$ the natural filtration of the process $X$.
Then, for every $x\in \R^d\setminus \N$ and $k\ge 1$, by the Markov property and Theorem \ref{T:lower},
\begin{align*}\Pp^x(B_k|{F}_{t_k})\ge&\min_{z\in \R^d\setminus \N}\Pp^z(|X_{t_k}-z|\ge c\psi(t_{k-1}))\\
\ge &\min_{z\in \R^d\setminus \N}\int_{\{c\psi(t_{k-1})\le |y-z|\le t_{k}\}} p(t_k,z,y)\,\d y\\
\ge& c_1t_{k}^{-d/2}\min_{z\in \R^d\setminus \N}\int_{\{c\psi(t_{k-1})\le |y-z|\le t_{k}\}} \exp\left(-\frac{c_2|z-y|^2}{t_{k}}\right)\,\d y\\
\ge&c_3 \int_{c^2\log\log (t_{k-1})/2}^{t_k} e^{-c_2s} s^{d/2-1}\,\d s\\
\ge& c_4 k^{-c^2c_2}.
\end{align*}
Choosing $c>0$ small enough such that $c^2c_2\in(0,1]$, we have
$$\sum_{k=1}^\infty \Pp^x(B_k|{F}_{t_k})=\infty.$$
Then by the second Borel-Cantelli lemma,
$$\Pp^x(\limsup B_k)=1.$$
This yields the desired assertion, see e.g. the proof of \cite[Theorem 3.1(2)]{SW17}.
\end{proof}

\noindent \textbf{Acknowledgements.}
The research of Yuichi Shiozawa is supported
by JSPS KAKENHI Grant Number JP26400135, JP17K05299,
and Research Institute for Mathematical Sciences, Kyoto University.
The research of Jian Wang is supported by National
Natural Science Foundation of China (No.\ 11522106), the Fok Ying Tung
Education Foundation (No.\ 151002), National Science Foundation of
Fujian Province (No.\ 2015J01003), the Program for Probability and Statistics: Theory and Application (No. IRTL1704), and Fujian Provincial
Key Laboratory of Mathematical Analysis and its Applications
(FJKLMAA).

\address{
Yuichi Shiozawa\\
Department of Mathematics\\
Graduate School of Science\\
Osaka University \\
Toyonaka, Osaka, 560-0043,
Japan
}
{\texttt{shiozawa@math.sci.osaka-u.ac.jp}}

\address{
Jian Wang\\
College of Mathematics and Informatics \& Fujian Key Laboratory of Mathematical Analysis and Applications (FJKLMAA)\\
Fujian Normal University\\
350007 Fuzhou, P.R. China.
}
{\texttt{jianwang@fjnu.edu.cn}}
\end{document}